\newtheorem{theorem}{Theorem}[section]
\newtheorem{lemma}{Lemma}[section]
\newtheorem{proposition}[lemma]{Proposition}
\newtheorem{corollary}[lemma]{Corollary}
\newtheorem{obs}[lemma]{Observation}
\theoremstyle{definition}
\newtheorem{notation}[lemma]{Notation}
\theoremstyle{remark}
\newtheorem{example}[lemma]{Example}
\newcommand{\N}{\ensuremath{{\mathbb N}}}
\newcommand{\C}{\ensuremath{\mathbb C}}
\newcommand{\HA}{\ensuremath{\mathcal{H}_A}}
\newcommand{\HB}{\ensuremath{\mathcal{H}_B}}
\newcommand{\HAB}{\ensuremath{\mathcal{H}_A\otimes \mathcal{H}_B}}
\newcommand{\QH}{\ensuremath{\mathcal{H}}}
\newcommand{\Pro}{\ensuremath{\mathbb P}}
\newcommand{\lr}{\ensuremath{\mathcal R}}
\author{Manimugdha Saikia}
\address{Department of Mathematics, University of Western Ontario, London Ontario N6A 5B7, Canada}
\email{msaikia@uwo.ca}
\date{\today}
\thanks{}
\title{Restrictions of holomorphic sections to products}
\begin{document}
\sloppy

\maketitle

\noindent {\bf Abstract.} We associate quantum states with subsets of a product of two compact connected K\"ahler manifolds $M_1$ and $M_2$. To associate the quantum state with the subset, we use the map that restricts holomorphic sections of the quantum line bundle over the product of the two K\"ahler manifolds to the subset. We present a description of the kernel of this restriction map when the subset is a finite union of products. This in turn shows that the quantum states associated with the finite union of products are separable. Finally, for every pure state and certain mixed state, we construct subsets of $M_1\times M_2$ such that the states associated with these subsets are the original states, to begin with.

\let\thefootnote\relax\footnotetext{\noindent {\bf Keywords:} compact K\"ahler manifold, holomorphic hermitian line bundle, global holomorphic sections, separable quantum states.}
\let\thefootnote\relax\footnotetext{\noindent \textbf{2020 MSC:} 53D50, 81R30, 81S10, 15-02 }


\section{Introduction}
The study of the interplay between geometric structures and analytic objects arising from these structures emerges in many different ways in mathematics. One way of studying this interplay is through geometric quantization. Geometric quantization, due to the work of Kostant, Souriau and Kirillov \cite{Sou67, Kos70}, is a technique that produces Hilbert spaces from symplectic manifolds. For a closed, connected K\"ahler manifold $M$ equipped with a (pre)quantum line bundle $L$, and for any $k \in \N$, we define the corresponding quantum space as the Hilbert space $H^0(M,L^k)$, the space of global holomorphic sections of $L^k\to M$. This provides a framework to investigate various quantum information-theoretic aspects of this Hilbert space and possible relationships with the corresponding manifolds. For instance, in \cite{BPU95}, the authors constructed, for each $k$, a holomorphic section of $H^0(M,L^k)$ associated to a Lagrangian submanifold $\Lambda$ of a compact K\"ahler manifold $M$ and studied various aspects of these states, for example, they computed large $k$ asymptotic of $\langle Tu_k,u_k \rangle$ for a Toeplitz operator $T$ and showed these states concentrate on $\Lambda$. Various similar studies have been made with the ``geometry vs analysis" theme for Lagrangian submanifolds \cite{BW00, DP06, Pao07} and isotropic submanifolds \cite{GU16} of a compact K\"ahler manifold.\\ 

Exploring the correspondence between submanifolds (or, more generally, subsets with nice properties) of K\"ahler manifolds and the corresponding states associated in different ways has appeared in many different contexts. In \cite{BP17}, the authors considered a compact connected complex manifold $M$ and associated a sequence of quantum states with the Lagrangian submanifold $M$ embedded anti-diagonally inside $M\times M$. They showed that this sequence of pure states is a sequence of maximally entangled states. In \cite{barron2022entanglement}, the authors considered a submanifold $\Lambda$ of the product of two integral compact K\"ahler manifolds $M_1$ and $M_2$ having pre-quantum line bundle $L_1$ and $L_2$. For every $N\in \N$, using the map that restricts global holomorphic sections of $L_1^N \boxtimes L_2^N \to M_1\times M_2$, the associated a sequence of mixed states $\rho_N$ with $\Lambda$ and showed that when $\Lambda$ is a product submanifold the states in this sequence are not entangled. Motivated by \cite{barron2022entanglement}, in this article, we come up with a different recipe to associate quantum states with subsets of $M_1\times M_2$ and study the entanglement properties of these newly associated states when we have a product subset. More generally, we showed that when $\Lambda$ is the union of finitely many product subsets, the states are not entangled. 

\section{Preliminaries and setup}
Let us first recall a few definitions from Quantum Information Theory. A partial list of good references to know more about quantum information theory can be found here \cite{Nielsen_Chuang_2010, Bengtsson_Zyczkowski_2006}. Let $\QH$ be a finite-dimensional Hilbert space. In the context of quantum information theory, this Hilbert space shall be called a \textit{quantum Hilbert space}. A unit vector $v$ in $\QH$ is called a \textit{pure state} and a hermitian positive semi-definite operator $\sigma$ on $\QH$ with trace 1 is called a \textit{mixed state}. Note that if $v$ is a pure state, then the orthogonal projection operator $\mathcal{P}_v$ onto the one-dimensional vector space spanned by $v$ also has the properties of a mixed state. This way, any pure state can also be considered a more general mixed state. Note that any mixed state $\sigma$, being a hermitian positive semi-definite operator, can be decomposed in the form $\sigma= \sum_{j} p_j \mathcal{P}_{v_j}$ for some pure states $v_j$ and $\sum_j p_j=1$ (for example the eigendecomposition of $\sigma$). However, this decomposition is not unique. We say a mixed state can be written as a convex combination of pure states. Further, by a state, we always mean a mixed state and we explicitly mention a pure state when it is one.\\

When our quantum Hilbert space $\QH$ is a tensor product of two finite-dimensional Hilbert spaces $\HA$ and $\HB$, we can talk about the entanglement of states. A pure state $v \in \HAB$ is called a \textit{separable} or \textit{non-entangled} if we can write $v = u \otimes w$ for some $u \in \HA$ and $w \in \HB$. Otherwise, it is called \textit{entangled}. A mixed state $\sigma$ is called separable or non-entangled if there exists a decomposition of $\sigma = \sum_{j} p_j\mathcal{P}_{v_j}$ such that each of the pure states $v_j$ are separable. Otherwise, the mixed state is called entangled.\\

For $j\in \{ 1,2\}$ let $(L_j,h_j)$ be a holomorphic hermitian line bundle on a compact connected K\"ahler manifold  $(M_j,\omega_j)$ of complex dimension $d_j\ge 1$ such that the curvature of the Chern connection on $L_j$ 
is $-i\omega_j$. A holomorphic hermitian line bundle satisfying the curvature condition is called a \textit{(pre)quantum line bundle}. A line bundle defined above becomes an ample line bundle. On top of that, we will assume that the line bundles are very ample. Let $\mu_j$ be the measure on $M_j$ associated to the volume form $\frac{\omega_j^{d_j}}{d_j!}$ and $\mu$ be the measure on $M_1\times M_2$ associated to the volume form $ \frac{\omega_1^{d_1}\omega_2^{d_2}}{d_1!d_2!}$. Let $\pi_1:M_1\times M_2\to M_1$ and $\pi_2:M_1\times M_2\to M_2$ be the 
projections onto the first and the second manifold respectively. Recall that the holomorphic hermitian line bundle $L_1\boxtimes L_2\to M_1\times M_2$ (the external tensor product of line bundles $L_1 \to M_1$ and $L_2 \to M_2$) is defined by $L_1\boxtimes L_2=\pi_1^*(L_1)\otimes \pi_2^*(L_2)$ 
 and 
 \begin{equation} \label{1}
    H^0(M_1\times M_2,L_1 \boxtimes L_2)\cong
  H^0(M_1,L_1)\otimes H^0(M_2,L_2).
 \end{equation}
The line bundle $L_1 \boxtimes L_2 \to M_1 \times M_2$ satisfies the curvature condition and becomes a quantum line bundle on $M_1 \times M_2$. Further, since the K\"ahler manifolds involved are compact, these spaces are finite-dimensional. The space of global holomorphic sections $H^0(M_j,L_j)$ forms a Hilbert space with the following inner product:
$$\langle s_1, s_2 \rangle_j = \int_{M_j} h_j(s_1(z),s_2(z))d\mu_j(z) \text{ for } s_1,s_2 \in H^0(M_j,L_j).$$
Similarly, the space $H^0(M_1\times M_2,L_1 \boxtimes L_2)$ is a Hilbert space with the inner product $\langle .,. \rangle$ given using $\langle .,.\rangle_1$ and $\langle .,.\rangle_2$ and the relation \eqref{1}. In this article, our quantum Hilbert space is $ H^0(M_1,L_1)\otimes H^0(M_2,L_2)$. For notational convenience, from now onwards we denote $\HA = H^0(M_1,L_1)$ and $\HB = H^0(M_2,L_2)$ so that our quantum Hilbert space is $\HAB$. \\

For $j\in \{1,2\}$, let $\Lambda_j$ be a non-empty subset of $M_j$. Let $E_j$ be the total space of the line bundles $L_j$ with the projection map $p_j: E_j \to M_j$ and $W_j$ denote the vector space $\{s:\Lambda_j \to E_j \;| \;p_j \circ s (z) =z \text{ for all } z\in \Lambda_j\}$ (that means $W_j$ is the space of sections restricted to $\Lambda_j$ but these sections do not have any additional structure like smoothness etc, since we have not assumed any structure on $\Lambda_j$ to keep our results in more general setup). Let  $\mathcal{R}_{\Lambda_j} : H^0(M_j,L_j) \to W_j$ be the linear map
$$s\mapsto s\Bigr |_{\Lambda_j}$$
that restricts a holomorphic section of $L_j\to M_j$ to the subset $\Lambda_j \subset M_j$. Similarly, let $E$ be the total space of line bundle $L_1\boxtimes L_2$ and $p: E\to M_1\times M_2$ be the projection map. For a subset $\Lambda$ of $M_1\times M_2$, let us denote $\mathcal{R}_{\Lambda} : H^0(M_1\times M_2,L_1 \boxtimes L_2) \to W$ be the map that restricts a holomorphic section of $L_1\boxtimes L_2 \to M_1\times M_2$ to the subset $\Lambda \subset M_1\times M_2$ where $W=\{s:\Lambda \to E \; | \; p \circ s (z) =z \text{ for all } z\in \Lambda\}$. We use the same notation $\lr_{\Lambda}$ to mean all these three restriction maps, but we pick the appropriate one based on whether $\Lambda$ is a subset of $M_1, M_2$ or $M_1\times M_2$. We hope that it does not create any confusion.
\begin{notation}
    For a subset $\Lambda \subset M_1 \times M_2$, let $\Pi_{\Lambda}^{\ker}$ and $\Pi_{\Lambda}^{\ker^{\perp}}$ be the orthogonal projections onto $\ker(\mathcal{R}_{\Lambda})$ and $\ker(\mathcal{R}_{\Lambda})^{\perp}$ respectively. 
    We denote $\rho_{\Lambda}^{\ker} =\frac{1}{\tr(\Pi_{\Lambda}^{\ker})}\Pi_{\Lambda}^{\ker} $ and $\rho_{\Lambda}^{\ker^{\perp}}= \frac{1}{\tr(\Pi_{\Lambda}^{\ker^{\perp}})}\Pi_{\Lambda}^{\ker^{\perp}}$ (whenever the trace is not zero). Whenever the trace is zero, we define the corresponding $\rho_{\Lambda}^{\ker}$ or $\rho_{\Lambda}^{\ker^{\perp}}$ as the zero operator.
\end{notation}
In this article, we associate the states $\rho_{\Lambda}^{\ker}$ and $\rho_{\Lambda}^{\ker^{\perp}}$ with the subset $\Lambda$ of $M_1 \times M_2$. Note that $\ker(\mathcal{R}_{\Lambda})$ or $\ker(\mathcal{R}_{\Lambda})^{\perp}$ can be zero sometimes and in that case the corresponding operator $\rho_{\Lambda}^{\ker}$ or $\rho_{\Lambda}^{\ker^{\perp}}$ is technically not a state, because of it being the zero operator, the trace condition on states is satisfied. However, at least one of $\rho_{\Lambda}^{\ker}$ or $\rho_{\Lambda}^{\ker^{\perp}}$ is a state (as their sum is $I$, the identity operator). In this article, we ignore this technicality and say that both of them are states, even when one of the operators is zero.
\begin{obs} \label{obs}
    We observe that $\rho_{\Lambda}^{\ker}+\rho_{\Lambda}^{\ker^{\perp}}=I$. Since their sum $I$ is a separable state, we can not have one of them separable while the other is not separable. Therefore $\rho_{\Lambda}^{\ker}$ is separable if and only if $\rho_{\Lambda}^{\ker^{\perp}}$ is separable.
\end{obs}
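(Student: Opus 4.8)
The plan is to split the statement into two parts, the operator identity and the separability biconditional it is meant to yield, and to treat them in that order. By the definitions preceding the observation, $\Pi_{\Lambda}^{\ker}$ and $\Pi_{\Lambda}^{\ker^{\perp}}$ are the orthogonal projections onto the complementary subspaces $\ker(\mathcal{R}_{\Lambda})$ and $\ker(\mathcal{R}_{\Lambda})^{\perp}$ of the finite-dimensional space $\HAB$, so as projections they satisfy $\Pi_{\Lambda}^{\ker}+\Pi_{\Lambda}^{\ker^{\perp}}=I$ and $\Pi_{\Lambda}^{\ker}\Pi_{\Lambda}^{\ker^{\perp}}=0$. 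Writing $d=\dim\HA\cdot\dim\HB$, $a=\tr(\Pi_{\Lambda}^{\ker})$ and $b=\tr(\Pi_{\Lambda}^{\ker^{\perp}})$ (so $a+b=d$), this rearranges to
\begin{equation*}
\frac{a}{d}\,\rho_{\Lambda}^{\ker}+\frac{b}{d}\,\rho_{\Lambda}^{\ker^{\perp}}=\frac{1}{d}\,I .
\end{equation*}
This is the usable form of the asserted identity ``$\rho_{\Lambda}^{\ker}+\rho_{\Lambda}^{\ker^{\perp}}=I$'': it displays the maximally mixed state $\tfrac1d I$ as a convex combination, with weights $a/d$ and $b/d$, of the two normalized states. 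The degenerate cases $a=0$ or $b=0$ follow at once from the stated convention (one $\rho$ is $0$ while the other equals $\tfrac1d I$), so I would set them aside and assume $a,b>0$.

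Next I would record the two facts that make the biconditional plausible. First, $\tfrac1d I$ is separable, since it is the tensor product of the maximally mixed states $\tfrac{1}{\dim\HA}I$ on $\HA$ and $\tfrac{1}{\dim\HB}I$ on $\HB$; equivalently, it is the uniform convex combination of the product projections $\mathcal{P}_{e_i}\otimes\mathcal{P}_{f_j}$ over any product orthonormal basis $\{e_i\otimes f_j\}$ of $\HAB$. Second, the assertion is symmetric in $\rho_{\Lambda}^{\ker}$ and $\rho_{\Lambda}^{\ker^{\perp}}$, so it suffices to prove a single implication, say that separability of $\rho_{\Lambda}^{\ker}$ forces separability of $\rho_{\Lambda}^{\ker^{\perp}}$.

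The transfer of separability across this decomposition is the step I expect to be the main obstacle, and it is where the argument needs genuine input beyond the convex identity. The tempting ``subtraction'' — from $\tfrac1d I=\tfrac ad\rho_{\Lambda}^{\ker}+\tfrac bd\rho_{\Lambda}^{\ker^{\perp}}$ with $\tfrac1d I$ and $\rho_{\Lambda}^{\ker}$ separable, conclude that $\rho_{\Lambda}^{\ker^{\perp}}=\tfrac1b(I-a\rho_{\Lambda}^{\ker})$ is separable — is not valid by convexity alone, because a separable state can be written as a convex combination in which one summand is entangled; the mere separability of $\tfrac1d I$ gives no control over $\rho_{\Lambda}^{\ker^{\perp}}$ by itself. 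The content must instead come from the orthogonality $\Pi_{\Lambda}^{\ker}\Pi_{\Lambda}^{\ker^{\perp}}=0$, i.e. that the two states are supported on orthogonal complementary subspaces. My approach would be to take a product decomposition $\rho_{\Lambda}^{\ker}=\sum_k q_k\,\mathcal{P}_{u_k\otimes w_k}$, note that each product vector $u_k\otimes w_k$ then lies in $\ker(\mathcal{R}_{\Lambda})$ (the range of a separable state is spanned by the product vectors of any of its decompositions), and try to manufacture a product decomposition of $\rho_{\Lambda}^{\ker^{\perp}}$ by reconciling the uniform product decomposition of $\tfrac1d I$ with the decomposition of $\rho_{\Lambda}^{\ker}$ so that the pieces respect the orthogonal splitting. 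I expect this reconciliation to be the crux: exploiting orthogonality of the supports is precisely the extra hypothesis that must carry the argument, and it may in fact require the additional structure on $\ker(\mathcal{R}_{\Lambda})$ available when $\Lambda$ is a finite union of products — the setting in which the separability conclusions are established downstream.
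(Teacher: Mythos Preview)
The paper offers no proof beyond the sentence embedded in the observation, and your analysis is more careful than that sentence. You correctly note that the literal identity $\rho_{\Lambda}^{\ker}+\rho_{\Lambda}^{\ker^{\perp}}=I$ fails under the paper's own normalization convention (the correct identity is $\Pi_{\Lambda}^{\ker}+\Pi_{\Lambda}^{\ker^{\perp}}=I$, equivalently your convex version), and you correctly flag the substantive gap: separability of a convex combination imposes no constraint on the separability of the individual summands, so the clause ``we cannot have one of them separable while the other is not'' does not follow from the separability of $\tfrac1d I$.

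That gap is not repairable, because the biconditional is actually false. Take $\HA=\HB=\C^2$, realized in the paper's framework by $M_j=\Pro^1$ with $L_j=\mathcal O(1)$, and let $s_0=\tfrac{1}{\sqrt2}(e_0\otimes e_0+e_1\otimes e_1)$. By the paper's own Theorem~\ref{Vanishing_of_section}, with $\Lambda=\mathcal V(s_0)$ one has $\rho_{\Lambda}^{\ker}=\mathcal P_{s_0}$, which is entangled; but $\rho_{\Lambda}^{\ker^{\perp}}=\tfrac13(I-\mathcal P_{s_0})$ has positive partial transpose and is therefore separable in $\C^2\otimes\C^2$. So your hope that orthogonality of the supports would carry the argument cannot succeed in general --- the supports in this counterexample are already orthogonal complements. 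The only place the paper actually invokes the observation is Corollary~\ref{main_corollary}, and there one can bypass it: the decomposition $\ker(\lr_\Lambda)=\bigoplus_{S\subset[n]} H_S^{(n)}\otimes K_S^{(n)}$ of Theorem~\ref{union_thorem}, with the $K_S^{(n)}$ pairwise orthogonal in $\HB$, yields directly
\[
\ker(\lr_\Lambda)^{\perp}=\bigoplus_{S\subset[n]}\bigl((H_S^{(n)})^{\perp}\otimes K_S^{(n)}\bigr)\ \oplus\ \Bigl(\HA\otimes\bigl(\textstyle\bigoplus_{S\subset[n]} K_S^{(n)}\bigr)^{\perp}\Bigr),
\]
which again has an orthonormal basis of product vectors, so $\rho_\Lambda^{\ker^\perp}$ is separable for finite unions of products without any appeal to the observation. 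Your closing suspicion --- that the conclusion might require the union-of-products structure rather than following formally from the observation --- is exactly right.
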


In \cite{barron2022entanglement}, the authors considered a submanifold $\Lambda$ of $M_1 \times M_2$ and considered the restriction map $\mathcal{R}_{\Lambda} : H^0(M_1\times M_2,L_1 \boxtimes L_2) \to L^2(\Lambda, L_1 \boxtimes L_2|_{\Lambda}
)$. In their setup, due to the additional smooth structure on $\Lambda$, the integrals over $\Lambda$ with respect to the measure $\mu$ are defined and in fact, the codomain becomes the space of $L^2$ sections. Using $\lr_{\Lambda}$ and the inner product on the codomain of this map, they associated certain states with $\Lambda$ and proved that when $\Lambda = \Lambda_1 \times \Lambda_2$ is a product submanifold then these states are separable. This motivated us to associate states a little differently and check if similar properties can be observed. Due to the use of the inner product on the codomain, they considered nice subsets over which we can integrate, such as submanifolds. Since the states $ \rho_{\Lambda}^{\ker}$ and $\rho_{\Lambda}^{\ker^{\perp}}$ we associated in our setup are directly linked to subspaces of $H^0(M_1\times M_2,L_1 \boxtimes L_2)$ and we are not using any additional structure of the co-domain other than it being a vector space, we don't restrict ourselves to submanifolds and take arbitrary subsets.\\

In an attempt to investigate similar properties of these newly associated states $ \rho_{\Lambda}^{\ker}$ and $\rho_{\Lambda}^{\ker^{\perp}}$ with respect to product submanifolds, we proof a result (proposition \ref{productKer}) that provides a nice description of the kernel of the restriction map to a product subset. Our main result theorem \eqref{union_thorem} extends proposition \eqref{productKer} to provide a nice description of the kernel of the restriction map to even more generality, to any finite union of products. As a corollary of \ref{union_thorem}, we get that our states are also separable for product submanifolds. This answers the similar result they got in \cite{barron2022entanglement}. In fact, we get a more general result that says the states are separable if $\Lambda$ is a finite union of products.\\

In section \ref{last_section}, we ask the question whether, for every state $\sigma$ on $\HAB$, there exists a subset $\Lambda$ of $M_1 \times M_2$ such that $\rho_{\Lambda}^{\ker}=\sigma$. As our constructed state is an orthogonal projection, clearly if $\sigma$ is not an orthogonal projection, then the answer is no. We partially answer the question for quantum states that are orthogonal projections. The solution is straightforward for pure states by the projective embedding of $M_1 \times M_2$ using the quantum line bundle. This also affirmatively says that the states $ \rho_{\Lambda}^{\ker}$ and $\rho_{\Lambda}^{\ker^{\perp}}$ associated this way with subsets are not always separable. For mixed states having a particular property, we use coherent states and the covariant symbol of $\sigma$ to construct the subset.

\section{Main results}
The purpose of this section is to prove that $ \rho_{\Lambda}^{\ker}$ and $\rho_{\Lambda}^{\ker^{\perp}}$ are separable when $\Lambda \subset M_1\times M_2$ is a finite union of product. For that purpose, we show that the range space of the states contains an orthonormal basis consisting of only decomposable vectors. If $\Lambda$ is a union of products, then we present a description of $\ker(\lr_{\Lambda})$ as a direct sum of Hilbert spaces each of which is a tensor product of Hilbert spaces that are orthogonal to one another. We get this in a few steps. First, we consider $\Lambda= \Lambda_1\times\Lambda_2$ where one of $\Lambda_1$ or $\Lambda_2$ is a singleton set and prove proposition \eqref{pointSub}. We use proposition \eqref{pointSub} to generalize it for products without the restriction of $\Lambda_1$ or $\Lambda_2$ being singleton to get proposition \eqref{productKer}.

\begin{proposition} \label{pointSub}
    For $j\in \{1,2\}$, let $\Lambda_j$ be a non-empty subset of $M_j$. If either $\Lambda_1$ or $\Lambda_2$ is a singleton set, then $$\ker(\mathcal{R}_{\Lambda_1 \times \Lambda_2}) = A_1 \oplus A_2,$$
    where 
    \begin{align*}
        A_1 = & \HA \otimes \ker(\mathcal{R}_{\Lambda_2}) \\
        A_2 = & \ker(\mathcal{R}_{\Lambda_1})\otimes \ker(\mathcal{R}_{\Lambda_2})^{\perp}.
    \end{align*}
    We dedude that $\ker(\mathcal{R}_{\Lambda_1 \times \Lambda_2})^{\perp} = \ker(\mathcal{R}_{\Lambda_1})^{\perp} \otimes \ker(\mathcal{R}_{\Lambda_2})^{\perp}$. 
\end{proposition}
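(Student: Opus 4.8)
The plan is to reduce everything to a pointwise statement about the line bundle together with a four‑fold orthogonal decomposition of $\HAB$. Since the asserted formula for $A_1$ and $A_2$ is the same regardless of which of $\Lambda_1,\Lambda_2$ is the singleton, I would write out the case $\Lambda_1=\{p\}$ in full and note that the case where $\Lambda_2$ is a singleton is the same argument with the roles of the two tensor factors exchanged. Write $K_1:=\ker(\mathcal{R}_{\{p\}})\subseteq\HA$ and $K_2:=\ker(\mathcal{R}_{\Lambda_2})\subseteq\HB$. Since $L_1$ is very ample, in particular base‑point free, the evaluation map $\mathrm{ev}_p\colon\HA\to (L_1)_p$ is surjective, hence nonzero, so $K_1$ has codimension one in $\HA$; fix a unit vector $u$ spanning $K_1^{\perp}$ and note that $u(p)$ is a nonzero vector of the one‑dimensional space $(L_1)_p$, hence a basis of it. I will use \eqref{1} together with its defining compatibility with fibrewise evaluation: the section corresponding to $\sum_i a_i\otimes b_i$ takes the value $\sum_i a_i(x)\otimes b_i(z)\in (L_1)_x\otimes (L_2)_z=(L_1\boxtimes L_2)_{(x,z)}$ at $(x,z)$.

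First I would verify $A_1\oplus A_2\subseteq\ker(\mathcal{R}_{\{p\}\times\Lambda_2})$ and that the sum is orthogonal. Orthogonality is immediate: $A_1=\HA\otimes K_2$ and $A_2=K_1\otimes K_2^{\perp}\subseteq\HA\otimes K_2^{\perp}$ are orthogonal because $K_2\perp K_2^{\perp}$. If $s\in A_1$, write $s=\sum_i a_i\otimes b_i$ with $b_i\in K_2$; then $b_i(z)=0$ for every $z\in\Lambda_2$, so $s(p,z)=\sum_i a_i(p)\otimes b_i(z)=0$. If $s\in A_2$, write $s=\sum_i a_i\otimes b_i$ with $a_i\in K_1$; then $a_i(p)=0$, so again $s(p,z)=0$ for all $z\in\Lambda_2$. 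Hence $A_1\oplus A_2\subseteq\ker(\mathcal{R}_{\{p\}\times\Lambda_2})$.

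For the reverse inclusion, I would decompose $\HAB=(K_1\oplus\C u)\otimes(K_2\oplus K_2^{\perp})$ into the four mutually orthogonal summands $K_1\otimes K_2$, $\C u\otimes K_2$, $K_1\otimes K_2^{\perp}$ and $\C u\otimes K_2^{\perp}$, noting that the first two lie in $\HA\otimes K_2=A_1$, the third equals $A_2$, and the fourth is $K_1^{\perp}\otimes K_2^{\perp}$. Given $s\in\ker(\mathcal{R}_{\{p\}\times\Lambda_2})$, write $s=s_0+u\otimes w$ with $s_0\in A_1\oplus A_2$ and $w\in K_2^{\perp}$. By the previous step $s_0|_{\{p\}\times\Lambda_2}=0$, so $(u\otimes w)|_{\{p\}\times\Lambda_2}=0$, i.e.\ $u(p)\otimes w(z)=0$ in $(L_1)_p\otimes (L_2)_z$ for every $z\in\Lambda_2$. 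Since $u(p)$ is a basis of the line $(L_1)_p$, this forces $w(z)=0$ for all $z\in\Lambda_2$, i.e.\ $w\in K_2$; as also $w\in K_2^{\perp}$, we get $w=0$ and $s=s_0\in A_1\oplus A_2$. This gives $\ker(\mathcal{R}_{\{p\}\times\Lambda_2})=A_1\oplus A_2$, and taking orthogonal complements in the four‑fold decomposition yields $\ker(\mathcal{R}_{\{p\}\times\Lambda_2})^{\perp}=\C u\otimes K_2^{\perp}=\ker(\mathcal{R}_{\{p\}})^{\perp}\otimes\ker(\mathcal{R}_{\Lambda_2})^{\perp}$, as claimed. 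When instead $\Lambda_2$ is the singleton $\{q\}$, the same computation applies with $K_2^{\perp}$ now one‑dimensional: one extracts a decomposable leftover $w\otimes v$ with $v$ spanning $\ker(\mathcal{R}_{\{q\}})^{\perp}$ and $v(q)$ a basis of $(L_2)_q$, and concludes $w=0$ in the same way.

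The genuinely non‑formal inputs are the isomorphism \eqref{1} with its compatibility with fibrewise evaluation (standard for external tensor products) and the base‑point‑freeness of $L_1$, which makes $\mathrm{ev}_p$ surjective and hence $u(p)\neq0$. I expect the main obstacle to be purely expository: pinning down the codomain of $\mathcal{R}_{\{p\}\times\Lambda_2}$ and the identification $(L_1\boxtimes L_2)_{(p,z)}\cong(L_1)_p\otimes(L_2)_z$ so that ``$u(p)\otimes w(z)=0$'' is literally meaningful and yields $w(z)=0$. A cleaner packaging I would likely adopt is to factor $\mathcal{R}_{\{p\}\times\Lambda_2}=(\mathrm{id}_{(L_1)_p}\otimes\mathcal{R}_{\Lambda_2})\circ(\mathrm{ev}_p\otimes\mathrm{id}_{\HB})$, after identifying $H^0(\{p\}\times M_2,\,L_1\boxtimes L_2|_{\{p\}\times M_2})$ with $(L_1)_p\otimes\HB$, and to compute the kernel of this composition: surjectivity of $\mathrm{ev}_p$ and faithfulness of tensoring with the one‑dimensional space $(L_1)_p$ give $\ker(\mathcal{R}_{\{p\}\times\Lambda_2})=(\mathrm{ev}_p\otimes\mathrm{id}_{\HB})^{-1}\bigl((L_1)_p\otimes K_2\bigr)=K_1\otimes\HB+\HA\otimes K_2$, which is exactly $A_1\oplus A_2$.
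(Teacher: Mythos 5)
Your proof is correct, and it takes a genuinely different route from the paper's. The paper proves the hard inclusion by massaging an arbitrary tensor decomposition: for $s\in\ker(\mathcal{R}_{\Lambda_1\times\{y\}})$ it writes $s=\sum_j u_j\otimes v_j$, reduces to the case $v_j\in\ker(\mathcal{R}_{\{y\}})^{\perp}$ (so $v_j(y)\neq0$), uses the singleton to find scalars $\lambda_j$ with $v_j(y)=\lambda_j v_1(y)$, and re-brackets the sum as $(u_1+\sum_{j\ge2}\lambda_j u_j)\otimes v_1+\sum_{j\ge2}u_j\otimes(v_j-\lambda_j v_1)\in A_2\oplus A_1$; the other singleton case is then handled by symmetry together with an identification of the resulting decomposition with $A_1\oplus A_2$. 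You instead note that $A_1\oplus A_2$ is exactly the orthogonal complement of $\ker(\mathcal{R}_{\Lambda_1})^{\perp}\otimes\ker(\mathcal{R}_{\Lambda_2})^{\perp}$ in $\HAB$, use that for the singleton factor this perp is (at most) a line spanned by a section not vanishing at the point, write the residual component of $s$ as a simple tensor, and kill it by evaluating at the point. This buys a uniform treatment of both singleton cases that lands directly on the stated asymmetric decomposition, makes the claim about $\ker(\mathcal{R}_{\Lambda_1\times\Lambda_2})^{\perp}$ immediate from the same four-block decomposition, and sidesteps the paper's ``so we assume $v_j\in\ker(\mathcal{R}_{\Lambda_2})^{\perp}$'' reduction (which, to be fully rigorous, needs each $v_j$ split into its $\ker$ and $\ker^{\perp}$ components first). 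Your appeal to very ampleness is legitimate (it is a standing hypothesis) but not really needed: any nonzero $u\in\ker(\mathcal{R}_{\{p\}})^{\perp}$ automatically satisfies $u(p)\neq0$, and if that perp were zero the hard inclusion is vacuous since then $A_1\oplus A_2=\HAB$. The closing factorization of $\mathcal{R}_{\{p\}\times\Lambda_2}$ through $\mathrm{ev}_p\otimes\mathrm{id}_{\HB}$ is also a valid, arguably cleaner, packaging of the same computation.
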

\begin{proof}
    Suppose $\Lambda_2 = \{y\}$. Let $(x,y) \in \Lambda_1 \times \Lambda_2$ and $s \in  [\HA \otimes \ker(\mathcal{R}_{\Lambda_2})] \oplus [\ker(\mathcal{R}_{\Lambda_1})\otimes (\ker(\mathcal{R}_{\Lambda_2}))^{\perp}]$. Then for some $n_1,n_2 \ge 1$, we can write $$s= \sum_{j=1}^{n_1} u_j\otimes v_j + \sum_{j=1}^{n_2}u'_j \otimes v'_j$$ where $v_j \in \ker(\lr_{\Lambda_2})$ for all $j\in \{1,...,n_1\}$ and $u_j' \in \ker(\lr_{\Lambda_1})$ for all $j \in \{1,...,n_2\}$. Take a trivializing open cover of the line bundle $L_1 \boxtimes L_2 \to M_1 \times M_2$. Then in an open neighbourhood of $(x,y)$ in the trivializing cover, we get that  $v_j(y)=0$ for all $j\in \{1,...,n_1\}$ and $u_j'(x)=0 $ for all $j \in \{1,...,n_2\}$.  Therefore, 
    \begin{align*}
        s(x,y) & = \sum_{j=1}^{n_1} u_j(x)\otimes v_j(y) + \sum_{j=1}^{n_2}u'_j(x) \otimes v'_j(y) =0.
    \end{align*}
    We get that $s \in \ker(\mathcal{R}_{\Lambda_1 \times \Lambda_2})$. Therefore, $A_1\oplus A_2 \subset \ker(\mathcal{R}_{\Lambda_1 \times \Lambda_2})$.\\
    
    For the inclusion from the other side, let $s \in \ker(\mathcal{R}_{\Lambda_1 \times \Lambda_2})$. Then there exists $d\in \N$ such that we can write $s = \sum_{j=1}^d u_j\otimes v_j$ where $u_j \in \HA$ and $v_j\in \HB$ are non-zero.
    If for some $j\in \{1,...,d\}$, we have $v_j\in \ker(\mathcal{R}_{\Lambda_2})$, then $u_j\otimes v_j \in \HA \otimes \ker(\mathcal{R}_{\Lambda_2})$. So we assume that $v_j \in (\ker(\mathcal{R}_{\Lambda_2}))^{\perp}$ for all $j \in \{1,...,d\}$, therefore $v_j(y) \ne 0$ for all $j \in  \{1,...,d\}$.\\

    Take a trivializing open cover of the line bundle $L_2 \to M_2$. In an open neighbourhood of $y$ in the trivializing open cover, each of the sections $v_j$ is given by a map with co-domain $\C$. Therefore $\{v_j(y):j\in \{1,2,...,d\}\}$ is a set of non-zero complex numbers. There exists non-zero $\lambda_2,...,\lambda_d \in \C$ such that $v_j(y) = \lambda_j v_1(y)$ for $j\in \{2,...,d\}$, i.e. for each $j$, we have $v_j - \lambda_j v_1 \in \ker(\mathcal{R}_{\Lambda_2})$. Now, let $x\in \Lambda_1$. Take a trivializing open cover of the line bundle $L_1 \to M_1$. In an open neighbourhood of $x$ in the trivializing open cover, each of the sections $u_j$ is given by a map with co-domain $\C$. By assumption, the section $s$ vanishes at $(x,y)$, therefore we have
    \begin{align*}
        & \sum_{j=1}^d u_j(x)v_j(y)=0 \\
        \Rightarrow \; & u_1(x)+\sum_{j=2}^d\lambda_ju_j(x)=0 \\
        \Rightarrow \; &  u_1+\sum_{j=2}^d\lambda_ju_j \in \ker(\mathcal{R}_{\Lambda_1})
    \end{align*}
    We can rearrange the terms to write $s$ in the following way: $$s=\sum_{j=1}^{d}u_j \otimes v_j = (u_1+\sum_{j=2}^d\lambda_ju_j)\otimes v_1 + \sum_{j=2}^su_j\otimes (v_j-\lambda_j v_1).$$ 
    Note that the first term in the above expression is an element of  $\ker(\mathcal{R}_{\Lambda_1})\otimes (\ker(\mathcal{R}_{\Lambda_2}))^{\perp} $ and the second term is an element of $\HA \otimes \ker(\mathcal{R}_{\Lambda_2})$. This finishes the proof of $\ker(\mathcal{R}_{\Lambda_1 \times \Lambda_2}) \subset A_1 \oplus A_2$. Hence $\ker(\mathcal{R}_{\Lambda_1 \times \Lambda_2}) =[\ker(\mathcal{R}_{\Lambda_1})\otimes (\ker(\mathcal{R}_{\Lambda_2}))^{\perp}] \oplus [\HA \otimes \ker(\mathcal{R}_{\Lambda_2})]$.\\
    
    When $\Lambda_1$ is a singleton set, we can use a similar argument (with roles of $\Lambda_1$ and $\Lambda_2$ reversed) to show that $\ker(\mathcal{R}_{\Lambda_1 \times \Lambda_2}) = [\ker(\mathcal{R}_{\Lambda_1})\otimes \HB] \oplus [(\ker(\mathcal{R}_{\Lambda_1}))^{\perp} \otimes \ker(\mathcal{R}_{\Lambda_2})]$ which is equal to the space $[\HA \otimes \ker(\mathcal{R}_{\Lambda_2})] \oplus [\ker(\mathcal{R}_{\Lambda_1})\otimes (\ker(\mathcal{R}_{\Lambda_2}))^{\perp}]$.\\

    Finally, we note that $A_1 \oplus A_2 \oplus \left[\ker(\mathcal{R}_{\Lambda_1})^{\perp} \otimes \ker(\mathcal{R}_{\Lambda_2})^{\perp}\right] = \HAB$ and 
    $\langle u,v \rangle =0$ for any $u\in \ker(\mathcal{R}_{\Lambda_1})^{\perp} \otimes \ker(\mathcal{R}_{\Lambda_2})^{\perp} $ and $v\in H_1 \oplus H_2 $, therefore $\ker(\mathcal{R}_{\Lambda_1 \times \Lambda_2})^{\perp} = \ker(\mathcal{R}_{\Lambda_1})^{\perp} \otimes \ker(\mathcal{R}_{\Lambda_2})^{\perp}$.
\end{proof}

Now that we have our description when one of the subsets involved is a point, our idea is to write $\Lambda_1\times \Lambda_2$ as the union $\bigcup_{x \in\Lambda_1} \{x\}\times \Lambda_2$, so that we can use the above to get $\ker(\lr_{\Lambda_1\times \Lambda_2})$ as intersections of subspaces. This allows us to extend the result to arbitrary $\Lambda_1$ and $\Lambda_2$. We keep in mind that the index set over which we have the intersection is an arbitrary index set and we need the direct sum to distribute over the intersection. This is not always true, not even for a finite index set (as an example take $V=\C^2$, $H_1 = $ span$\{(1,0)\},$ $H_2 = $ span$\{(0,1)\}$ and $H_3 = $ span$\{(1,1)\}$, then $(H_1 \oplus H_2) \cap (H_1 \oplus H_3) = \C^2$ but $H_1 \oplus (H_2 \cap H_3)=H_1 \ne \C^2$). However, if we put further conditions, which are satisfied in our case, then the direct sum distributes over the intersection. We provide a short proof of this fact in the appendix. The condition to make this distributive property hold is a crucial part that allows us to extend proposition \eqref{pointSub}. Further, we also need the distributive property of tensor products over intersections, which is true for vector spaces. We state these two lemmas here.
\begin{lemma} \label{sumDistIntersec}
    Let $\mathcal{I}$ be an arbitrary index set. Let $\{V_j\}_{j\in \mathcal{I}}$ and $\{W_j\}_{j\in \mathcal{I}}$ be two sets of subspaces of a vector space $X$. Let there exist two subspaces $V$ and $W$ of $X$ such that $\sum_{j\in \mathcal{I}}V_j \subset V$, $\sum_{j\in \mathcal{I}}W_j \subset W$ and $W \cap V = \{0\}$, then $$\bigcap_{j\in \mathcal{I}}(W_j\oplus V_j) = \left(\bigcap_{j\in \mathcal{I}}W_j\right) \oplus \left(\bigcap_{j\in \mathcal{I}}V_j\right).$$
\end{lemma}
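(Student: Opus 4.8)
The plan is to prove Lemma \ref{sumDistIntersec} by establishing the two inclusions separately, with the nontrivial direction relying on the hypothesis $V \cap W = \{0\}$ to guarantee that every element of the left-hand side has a \emph{unique} decomposition into a $W$-part and a $V$-part.

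\medskip

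First I would dispose of the easy inclusion $\left(\bigcap_{j} W_j\right) \oplus \left(\bigcap_{j} V_j\right) \subset \bigcap_{j}(W_j \oplus V_j)$. Note that the left side is indeed a (well-defined, internal) direct sum because $\bigcap_j W_j \subset W$ and $\bigcap_j V_j \subset V$, and $W \cap V = \{0\}$. If $x = w + v$ with $w \in \bigcap_j W_j$ and $v \in \bigcap_j V_j$, then for every fixed $j \in \mathcal{I}$ we have $w \in W_j$ and $v \in V_j$, hence $x \in W_j \oplus V_j$; taking the intersection over $j$ gives the claim. This step is purely formal.

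\medskip

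The substantive direction is $\bigcap_{j}(W_j \oplus V_j) \subset \left(\bigcap_{j} W_j\right) \oplus \left(\bigcap_{j} V_j\right)$. Take $x \in \bigcap_{j \in \mathcal{I}}(W_j \oplus V_j)$. For each $j$ we may write $x = w_j + v_j$ with $w_j \in W_j \subset W$ and $v_j \in V_j \subset V$. The key point is that this decomposition of $x$ as (element of $W$) $+$ (element of $V$) is unique: if $w_j + v_j = w_k + v_k$ then $w_j - w_k = v_k - v_j \in W \cap V = \{0\}$, so $w_j = w_k$ and $v_j = v_k$ for all $j, k \in \mathcal{I}$. Hence there is a single vector $w := w_j$ (independent of $j$) lying in $W_j$ for every $j$, i.e. $w \in \bigcap_j W_j$, and similarly a single $v := v_j \in \bigcap_j V_j$, with $x = w + v$. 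This proves the inclusion and, combined with the first part, the equality. (The direct-sum notation on the right is justified throughout by $\bigcap_j W_j \cap \bigcap_j V_j \subset W \cap V = \{0\}$.)

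\medskip

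I do not expect any serious obstacle here; the only thing to be careful about is to use the containment hypotheses $\sum_j W_j \subset W$ and $\sum_j V_j \subset V$ solely to transport the disjointness $W \cap V = \{0\}$ down to every pair $W_j, V_j$ and to every intersection, which is exactly what makes the decomposition coordinate-independent. The counterexample flagged in the text (with $H_1, H_2, H_3$ in $\C^2$) is precisely the situation where no such separating pair $V, W$ exists, so the hypothesis is doing real work and the proof cannot be shortened by dropping it.
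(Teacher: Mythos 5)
Your argument is correct and follows essentially the same route as the paper's appendix proof: the easy inclusion is formal, and for the converse you use $W\cap V=\{0\}$ to see that the decompositions $x=w_j+v_j$ agree across all $j$, yielding a single $w\in\bigcap_j W_j$ and $v\in\bigcap_j V_j$. No gaps.
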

\begin{lemma}\label{tensorDistIntersec} 
    Let $\mathcal{I}$ be an arbitrary index set and $\{V_j\}_{j\in \mathcal{I}}$ be a set of subspaces of a vector space $V$. For an arbitrary vector space $W$, we have $$\bigcap_{j\in \mathcal{I}} (V_j \otimes W) = \left(\bigcap_{j\in \mathcal{I}} V_j\right) \otimes W.$$
\end{lemma}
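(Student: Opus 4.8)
The plan is to prove the two inclusions of Lemma~\ref{tensorDistIntersec} separately; one is purely formal, and the substantive one I would obtain by fixing a basis of $W$ and passing to the resulting coordinate description of $V\otimes W$.

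A preliminary point I would settle first: since everything is over a field, $-\otimes W$ is exact, so for any subspace $U\subseteq V$ the canonical map $U\otimes W\to V\otimes W$ is injective, and I would use it to identify $U\otimes W$ with a subspace of $V\otimes W$. This makes the asserted equality an equality of honest subspaces of the single space $V\otimes W$, so that the intersection is literally the set-theoretic intersection inside $V\otimes W$, and each $V_j\otimes W$ as well as $\left(\bigcap_{j\in\mathcal{I}}V_j\right)\otimes W$ is a genuine subspace there.

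The inclusion $\left(\bigcap_{j\in\mathcal{I}}V_j\right)\otimes W\subseteq\bigcap_{j\in\mathcal{I}}(V_j\otimes W)$ is immediate, since $\bigcap_{j\in\mathcal{I}}V_j\subseteq V_j$ forces $\left(\bigcap_{j\in\mathcal{I}}V_j\right)\otimes W\subseteq V_j\otimes W$ for every $j$. For the reverse, I would fix a basis $\{w_k\}_{k\in K}$ of $W$ and use the resulting isomorphism $V\otimes W\cong\bigoplus_{k\in K}V$: every $t\in V\otimes W$ has a unique expression $t=\sum_{k\in K}v_k\otimes w_k$ with $v_k\in V$ and only finitely many $v_k$ nonzero. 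The key observation I would isolate is that, for any subspace $U\subseteq V$, one has $t\in U\otimes W$ if and only if $v_k\in U$ for all $k\in K$; the nontrivial direction I would prove by writing $t=\sum_i u_i\otimes\eta_i$ with $u_i\in U$, expanding each $\eta_i$ in the basis $\{w_k\}$, regrouping terms, and invoking uniqueness of the coordinates $v_k$. Granting this, if $t\in\bigcap_{j\in\mathcal{I}}(V_j\otimes W)$, then applying the observation with $U=V_j$ gives $v_k\in V_j$ for every $j$ and every $k$, hence $v_k\in\bigcap_{j\in\mathcal{I}}V_j$ for every $k$; applying the observation once more with $U=\bigcap_{j\in\mathcal{I}}V_j$ yields $t\in\left(\bigcap_{j\in\mathcal{I}}V_j\right)\otimes W$, which finishes the proof.

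I do not expect a genuine obstacle: the only places that need care are (i) justifying that the various tensor products embed in $V\otimes W$ as subspaces, which is exactness of $-\otimes W$ over a field, and (ii) the coordinate characterization of $U\otimes W$ relative to a fixed basis of $W$. It is worth noting that nothing in the argument uses finiteness of $\mathcal{I}$ or of $\dim W$ — the finite support of the coordinate expansion is automatic — so the lemma holds in the full stated generality.
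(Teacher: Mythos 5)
Your proof is correct and rests on the same core mechanism as the paper's argument: expand the element with respect to linearly independent vectors on the $W$ side and use uniqueness of the $V$-side coefficients to conclude they lie in every $V_j$, hence in $\bigcap_{j\in\mathcal{I}}V_j$. The only cosmetic difference is that you fix a global basis of $W$ and state a membership criterion for $U\otimes W$, while the paper works inside finite-dimensional spans of the vectors actually appearing in the representations; both handle arbitrary $\mathcal{I}$ and infinite-dimensional $W$ equally well.
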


Now let us use proposition \eqref{pointSub} and the above two lemmas to prove the following proposition.

\begin{proposition}\label{productKer}
     For $j\in \{1,2\}$, let $\Lambda_j$ be a non-empty subset of $M_j$. Then, $$\ker(\mathcal{R}_{\Lambda_1 \times \Lambda_2}) = A_1 \oplus A_2,$$
    where 
    \begin{align*}
        A_1 = & \HA \otimes \ker(\mathcal{R}_{\Lambda_2}) \\
        A_2 = & \ker(\mathcal{R}_{\Lambda_1})\otimes (\ker(\mathcal{R}_{\Lambda_2}))^{\perp}.
    \end{align*}
    We dedude that $\ker(\mathcal{R}_{\Lambda_1 \times \Lambda_2})^{\perp} = \ker(\mathcal{R}_{\Lambda_1})^{\perp} \otimes \ker(\mathcal{R}_{\Lambda_2})^{\perp}$.   
\end{proposition}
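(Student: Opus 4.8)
The plan is to reduce Proposition \ref{productKer} to Proposition \ref{pointSub} by slicing $\Lambda_1$ into its individual points. The elementary observation underlying everything is that for any family $\{\Sigma_i\}_{i\in\mathcal{I}}$ of subsets of $M_1\times M_2$ (or of $M_1$, or of $M_2$), a global holomorphic section vanishes on $\bigcup_i\Sigma_i$ if and only if it vanishes on each $\Sigma_i$, so that $\ker(\mathcal{R}_{\bigcup_i\Sigma_i})=\bigcap_i\ker(\mathcal{R}_{\Sigma_i})$. Applying this to the decomposition $\Lambda_1\times\Lambda_2=\bigcup_{x\in\Lambda_1}(\{x\}\times\Lambda_2)$ gives $\ker(\mathcal{R}_{\Lambda_1\times\Lambda_2})=\bigcap_{x\in\Lambda_1}\ker(\mathcal{R}_{\{x\}\times\Lambda_2})$, and applying it to $\Lambda_1=\bigcup_{x\in\Lambda_1}\{x\}$ gives $\bigcap_{x\in\Lambda_1}\ker(\mathcal{R}_{\{x\}})=\ker(\mathcal{R}_{\Lambda_1})$.

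Since each $\{x\}$ is a singleton, Proposition \ref{pointSub} applies to every slice and yields
$$\ker(\mathcal{R}_{\{x\}\times\Lambda_2})=W_x\oplus V_x,\qquad W_x:=\HA\otimes\ker(\mathcal{R}_{\Lambda_2}),\quad V_x:=\ker(\mathcal{R}_{\{x\}})\otimes\ker(\mathcal{R}_{\Lambda_2})^{\perp},$$
where $W_x$ does not depend on $x$. The next step is to intersect over $x\in\Lambda_1$ and push the intersection first past the direct sum and then past the tensor product, using the two lemmas. To invoke Lemma \ref{sumDistIntersec} I would take the separating pair $W:=\HA\otimes\ker(\mathcal{R}_{\Lambda_2})$ and $V:=\HA\otimes\ker(\mathcal{R}_{\Lambda_2})^{\perp}$: then $\sum_x W_x=W\subset W$ and $\sum_x V_x\subset V$ because $\ker(\mathcal{R}_{\{x\}})\subset\HA$, while $W\cap V=\{0\}$ since $\HA\otimes\HB$ is the internal direct sum $\bigl(\HA\otimes\ker(\mathcal{R}_{\Lambda_2})\bigr)\oplus\bigl(\HA\otimes\ker(\mathcal{R}_{\Lambda_2})^{\perp}\bigr)$. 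Lemma \ref{sumDistIntersec} then gives
$$\ker(\mathcal{R}_{\Lambda_1\times\Lambda_2})=\Bigl(\bigcap_{x\in\Lambda_1}W_x\Bigr)\oplus\Bigl(\bigcap_{x\in\Lambda_1}V_x\Bigr)=\bigl(\HA\otimes\ker(\mathcal{R}_{\Lambda_2})\bigr)\oplus\Bigl(\bigcap_{x\in\Lambda_1}V_x\Bigr),$$
and Lemma \ref{tensorDistIntersec}, applied with the fixed factor $\ker(\mathcal{R}_{\Lambda_2})^{\perp}$ (using the symmetry of $\otimes$ to place the varying factor where the lemma wants it), gives $\bigcap_{x\in\Lambda_1}V_x=\bigl(\bigcap_{x\in\Lambda_1}\ker(\mathcal{R}_{\{x\}})\bigr)\otimes\ker(\mathcal{R}_{\Lambda_2})^{\perp}=\ker(\mathcal{R}_{\Lambda_1})\otimes\ker(\mathcal{R}_{\Lambda_2})^{\perp}$ by the slicing identity of the first paragraph. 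This is precisely $A_1\oplus A_2$.

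For the final assertion I would repeat the orthogonality argument from the end of Proposition \ref{pointSub}: the three subspaces $A_1$, $A_2$ and $\ker(\mathcal{R}_{\Lambda_1})^{\perp}\otimes\ker(\mathcal{R}_{\Lambda_2})^{\perp}$ are mutually orthogonal (because $\ker(\mathcal{R}_{\Lambda_j})\perp\ker(\mathcal{R}_{\Lambda_j})^{\perp}$ for $j=1,2$) and their sum is all of $\HAB$, whence $\ker(\mathcal{R}_{\Lambda_1\times\Lambda_2})^{\perp}=\ker(\mathcal{R}_{\Lambda_1})^{\perp}\otimes\ker(\mathcal{R}_{\Lambda_2})^{\perp}$. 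The one place that genuinely requires care — and the reason Proposition \ref{pointSub} is not already the whole story — is verifying the hypothesis of Lemma \ref{sumDistIntersec}: the index set $\Lambda_1$ may be infinite, so one really needs distributivity of $\oplus$ over an infinite intersection, and one must exhibit the separating subspaces $(W,V)$ as above; once that is in place the remainder is bookkeeping, the only other point to watch being the matching of tensor factors to the form in which Lemma \ref{tensorDistIntersec} is stated.
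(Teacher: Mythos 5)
Your proposal is correct and follows essentially the same route as the paper: slice $\Lambda_1\times\Lambda_2$ as $\bigcup_{x\in\Lambda_1}\{x\}\times\Lambda_2$, apply Proposition \ref{pointSub} to each slice, and distribute the (possibly infinite) intersection using Lemma \ref{sumDistIntersec} with the separating pair $\HA\otimes\ker(\mathcal{R}_{\Lambda_2})$, $\HA\otimes\ker(\mathcal{R}_{\Lambda_2})^{\perp}$ and then Lemma \ref{tensorDistIntersec}. The orthogonality argument you give for the statement about $\ker(\mathcal{R}_{\Lambda_1\times\Lambda_2})^{\perp}$ is also the same one the paper uses (stated there at the end of the proof of Proposition \ref{pointSub}).
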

\begin{proof}
    Let $x \in \Lambda_1$. Using proposition \eqref{pointSub}, we have $$\ker(\mathcal{R}_{ \{x\}\times \Lambda_2}) = \left[\HA \otimes \ker(\mathcal{R}_{\Lambda_2})\right] \oplus \left[\ker(\mathcal{R}_{\{x\}})\otimes (\ker(\mathcal{R}_{\Lambda_2}))^{\perp}\right].$$
    For any $s\in \HAB$,
    \begin{align*}
        & s \in \ker(\mathcal{R}_{\Lambda_1 \times \Lambda_2}) \\
        \iff & s (x,y) = 0 \; \text{ for all } x\in \Lambda_1, y\in \Lambda_2 \\
        \iff & s \in \ker(\mathcal{R}_{\{x\} \times \Lambda_2}) \; \text{ for all } x\in \Lambda_1\\
        \iff & s \in \bigcap_{x\in \Lambda_1} \ker(\mathcal{R}_{\{x\} \times \Lambda_2 })
    \end{align*}
    Therefore, $ \ker(\mathcal{R}_{\Lambda_1 \times \Lambda_2}) = \bigcap\limits_{x\in \Lambda_1} \ker(\mathcal{R}_{\{x\} \times \Lambda_2 })$. Similarly, it is easy to verify that $\ker(\mathcal{R}_{\Lambda_1}) = \bigcap\limits_{x \in \Lambda_1} \ker(\mathcal{R}_{\{x\}})$. We notice that $$\sum_{x\in \Lambda_1}\left(\ker(\mathcal{R}_{\{x\}}) \otimes (\ker(\mathcal{R}_{\Lambda_2}))^{\perp}\right) \subset \HA \otimes (\ker(\mathcal{R}_{\Lambda_2}))^{\perp} $$ and $$ \left(\HA \otimes \ker(\mathcal{R}_{\Lambda_2})\right) \cap \left(\HA \otimes (\ker(\mathcal{R}_{\Lambda_2}))^{\perp}\right) = \{0\},$$so lemma \ref{sumDistIntersec} can be applied in the following.  We have,
    \begin{align*}
        & \ker(\mathcal{R}_{\Lambda_1 \times \Lambda_2}) \\
        = & \bigcap\limits_{x\in \Lambda_1} \ker(\mathcal{R}_{\{x\} \times \Lambda_2 }) \\
        = & \bigcap_{x\in \Lambda_1}\left[\left(\HA \otimes \ker(\mathcal{R}_{\Lambda_2})\right) \oplus \left(\ker(\mathcal{R}_{\Lambda_1})\otimes (\ker(\mathcal{R}_{\Lambda_2}))^{\perp}\right)\right]\\
        = & \left[\HA \otimes \ker(\mathcal{R}_{\Lambda_2})\right] \oplus \left[\bigcap_{x\in \Lambda_1}\left(\ker(\mathcal{R}_{\{x\}})\otimes (\ker(\mathcal{R}_{\Lambda_2}))^{\perp}\right)\right] \text{\quad (using lemma \eqref{sumDistIntersec})}\\
        = & \left[\HA \otimes \ker(\mathcal{R}_{\Lambda_2})\right] \oplus \left[\left(\bigcap_{x\in \Lambda_1}\ker(\mathcal{R}_{\{x\}})\right)\otimes (\ker(\mathcal{R}_{\Lambda_2}))^{\perp}\right] \text{\quad (using lemma \eqref{tensorDistIntersec})}\\
        = & \left[\HA \otimes \ker(\mathcal{R}_{\Lambda_2})\right] \oplus \left[\ker(\mathcal{R}_{\Lambda_1})\otimes (\ker(\mathcal{R}_{\Lambda_2}))^{\perp}\right]
    \end{align*}

\end{proof}
This leads our way to prove the main theorem in this article. We try to recognize the pattern in the product case to extend it to a finite union of products. For this purpose, if we have $\Lambda = \bigcup_{j\in [n]}(U_j \times V_j)$, we apply proposition \eqref{productKer} to decompose the kernels of each of the products $U_j \times V_j$ in the union cleverly to arrive at a nice direct sum decomposition indexed by the subsets of $\{1,2,...,n\}$.

\begin{theorem} \label{union_thorem}
    Let $U_1, U_2,...,U_n$ be non-empty subsets of $M_1$ and $V_1, V_2,...,V_n$ be non-empty subsets of $M_2$. Denote $[n] = \{1,2,,...,n\}$. Let $\Lambda = \bigcup_{j\in [n]}(U_j \times V_j)$.  Then
    $$\ker(\mathcal{R}_{\Lambda}) = \bigoplus_{S \subset [n]} \left(H^{(n)}_S \otimes K^{(n)}_S\right)$$
    where
    \[
    H^{(n)}_S = \begin{cases}
    \HA & \text{\quad if } S=\phi\\
    \bigcap\limits_{j\in S}\ker(\lr_{U_j}) & \text{\quad otherwise}
    \end{cases}
    \]
    and $K^{(n)}_S = \bigcap_{j=1}^n W_{S,j}$ where
    \[
    W_{S,j} = \begin{cases}
        \ker(\lr_{V_j})^{\perp} & \text{\quad if } j\in S \\
        \ker(\lr_{V_j}) & \text{\quad if } j\notin S.
    \end{cases}
    \]
\end{theorem}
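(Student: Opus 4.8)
The plan is to induct on $n$, the number of products in the union, using Proposition \eqref{productKer} as the base case $n=1$ (where the only nonempty $S$ is $\{1\}$, giving $H^{(1)}_{\{1\}}\otimes K^{(1)}_{\{1\}} = \ker(\lr_{U_1})\otimes\ker(\lr_{V_1})^\perp$ and $H^{(1)}_\phi\otimes K^{(1)}_\phi = \HA\otimes\ker(\lr_{V_1})$, matching Proposition \eqref{productKer} exactly). For the inductive step, write $\Lambda = \Lambda' \cup (U_n\times V_n)$ where $\Lambda' = \bigcup_{j\in[n-1]}(U_j\times V_j)$, and observe that $\ker(\lr_\Lambda) = \ker(\lr_{\Lambda'})\cap\ker(\lr_{U_n\times V_n})$, since a section vanishes on $\Lambda$ iff it vanishes on both pieces. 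By the inductive hypothesis $\ker(\lr_{\Lambda'}) = \bigoplus_{S\subset[n-1]}(H^{(n-1)}_S\otimes K^{(n-1)}_S)$, and by Proposition \eqref{productKer}, $\ker(\lr_{U_n\times V_n}) = \left(\HA\otimes\ker(\lr_{V_n})\right)\oplus\left(\ker(\lr_{U_n})\otimes\ker(\lr_{V_n})^\perp\right)$.

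The main work is to intersect these two decompositions and reorganize the result into the claimed sum over subsets of $[n]$. The key observation is that the direct sum $\ker(\lr_{\Lambda'}) = \bigoplus_S (\cdots)$ can be regrouped according to whether a section's components lie in $\ker(\lr_{V_n})$ or its orthogonal complement: since each $K^{(n-1)}_S = \bigcap_{j=1}^{n-1}W_{S,j}$ is already a subspace of $\HB$ that is either inside $\ker(\lr_{V_n})$, inside $\ker(\lr_{V_n})^\perp$, or splits, I would first refine each summand $H^{(n-1)}_S\otimes K^{(n-1)}_S$ by splitting $K^{(n-1)}_S = \left(K^{(n-1)}_S\cap\ker(\lr_{V_n})\right)\oplus\left(K^{(n-1)}_S\cap\ker(\lr_{V_n})^\perp\right)$. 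This turns the $n-1$ decomposition into a sum indexed by subsets of $[n]$: a subset $S\subset[n-1]$ together with a choice of $n\in S$ or $n\notin S$ gives exactly a subset of $[n]$, and $K^{(n-1)}_S\cap W_{S,n}$ (with $W_{S,n}$ as in the statement) equals $K^{(n)}_{S'}$ for the corresponding $S'\subset[n]$. Then I intersect this refined decomposition with $\ker(\lr_{U_n\times V_n})$ term by term, using Lemma \ref{sumDistIntersec} to push the intersection inside the direct sum; the orthogonality hypothesis of that lemma is met because the $H$-components of distinct summands still sit inside $\HA$ while the $K$-components are pairwise orthogonal (being intersections with the mutually orthogonal spaces $\ker(\lr_{V_n})$ and $\ker(\lr_{V_n})^\perp$, or differing already at some earlier $V_j$). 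Finally, on each summand one applies Lemma \ref{tensorDistIntersec} to pull the intersection with $\HA\otimes\ker(\lr_{V_n})$ or $\ker(\lr_{U_n})\otimes\ker(\lr_{V_n})^\perp$ through the tensor product, which either kills the $K$-factor's $\ker(\lr_{V_n})^\perp$-part and leaves $H^{(n)}_{S'}$ unchanged (when $n\notin S'$), or intersects $H^{(n-1)}_S$ with $\ker(\lr_{U_n})$ to produce $H^{(n)}_{S'} = \bigcap_{j\in S'}\ker(\lr_{U_j})$ (when $n\in S'$), while the $K$-factor becomes $K^{(n)}_{S'}$.

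I expect the main obstacle to be bookkeeping: carefully verifying that the hypotheses of Lemma \ref{sumDistIntersec} (existence of ambient spaces $V$, $W$ with $V\cap W=\{0\}$ containing the relevant sums) hold at each application, and that the reindexing $S\subset[n-1]$ plus a bit $\mapsto S'\subset[n]$ genuinely matches the definitions of $H^{(n)}_{S'}$ and $K^{(n)}_{S'}$ on the nose, including the edge case $S' = \phi$ where $H^{(n)}_\phi = \HA$. One must also confirm that the resulting sum over all $S'\subset[n]$ really is an internal direct sum, i.e.\ the summands are linearly independent (indeed pairwise orthogonal), which follows because any two distinct $S', S''\subset[n]$ differ at some index $j$, so $K^{(n)}_{S'}$ and $K^{(n)}_{S''}$ are forced into $\ker(\lr_{V_j})$ versus $\ker(\lr_{V_j})^\perp$ and are therefore orthogonal; hence the tensor factors are orthogonal and the whole decomposition is orthogonal. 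A cleaner alternative I would consider, avoiding iterated induction, is to directly prove $\ker(\lr_\Lambda) = \bigcap_{j\in[n]}\ker(\lr_{U_j\times V_j})$, substitute the Proposition \eqref{productKer} description of each factor, and apply Lemmas \ref{sumDistIntersec} and \ref{tensorDistIntersec} repeatedly in one combinatorial sweep indexed by subsets of $[n]$; but the induction is likely the safest way to keep the orthogonality hypotheses transparent.
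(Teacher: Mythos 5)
Your overall frame — induction on $n$, base case Proposition \eqref{productKer}, the identity $\ker(\lr_{\Lambda}) = \ker(\lr_{\Lambda'})\cap\ker(\lr_{U_n\times V_n})$, the reindexing of subsets of $[n-1]$ plus a bit by subsets of $[n]$, and the pairwise orthogonality of the $K^{(n)}_{S'}$ — is exactly the paper's. But there is a genuine gap at your refinement step: you assume $K^{(n-1)}_S = \left(K^{(n-1)}_S\cap\ker(\lr_{V_n})\right)\oplus\left(K^{(n-1)}_S\cap\ker(\lr_{V_n})^{\perp}\right)$. A subspace is in general \emph{not} the direct sum of its intersections with a subspace and its orthogonal complement, and $\ker(\lr_{V_n})$ bears no special relation to the $\ker(\lr_{V_j})$ with $j<n$ that would force this. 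For instance, if $\HB$ is two-dimensional and $V_1,V_2$ are single points in general position, then $K^{(1)}_{\phi}=\ker(\lr_{V_1})$ is a line meeting both $\ker(\lr_{V_2})$ and $\ker(\lr_{V_2})^{\perp}$ only in $\{0\}$, so the right-hand side is $\{0\}\ne K^{(1)}_{\phi}$. Consequently your "refined decomposition" is in general a proper subspace of $\ker(\lr_{\Lambda'})$, and intersecting it term by term with $\ker(\lr_{U_n\times V_n})$ only yields the easy inclusion $\bigoplus_{S'\subset[n]}H^{(n)}_{S'}\otimes K^{(n)}_{S'}\subset\ker(\lr_{\Lambda})$, not the reverse one.

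The reverse inclusion is where the content of the inductive step lies, and it is precisely what the paper's proof spends its effort on: it writes $K^{(n)}_S = K^{(n+1)}_S\oplus K^{(n+1)}_{S\cup\{n+1\}}\oplus Y_S$ and introduces further complements $X_S$, $Z_S$, $P$, $Q$ on the $H$-side and inside $\ker(\lr_{V_{n+1}})$, $\ker(\lr_{V_{n+1}})^{\perp}$, so that both $\ker(\lr_{\Lambda'})$ and $\ker(\lr_{U_{n+1}\times V_{n+1}})$ appear as the common core $\bigoplus_{S\subset[n+1]}H^{(n+1)}_S\otimes K^{(n+1)}_S$ plus remainders $F$ and $F'$; the long list of pairwise trivial intersections then shows $F\cap F'=\{0\}$, which is what eliminates every contribution outside the core. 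Your plan dismisses this as "bookkeeping," but without tracking these complements the argument does not close, and the same obstruction defeats your proposed one-sweep alternative of intersecting all the $\ker(\lr_{U_j\times V_j})$ at once: Lemma \eqref{sumDistIntersec} distributes an intersection over direct sums only when the two groups of summands sit inside fixed complementary ambient spaces, a hypothesis the unsplit remainders prevent you from arranging directly. To repair the write-up, introduce the complements explicitly and prove the analogue of $F\cap F'=\{0\}$ (or otherwise show the leftover pieces meet $\ker(\lr_{U_n\times V_n})$, respectively $\ker(\lr_{\Lambda'})$, trivially).
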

\begin{proof}
    We shall use induction on $n$. The base case $n=1$ is true by proposition \eqref{productKer}.\\
    Suppose the statement is true for some $n\in \N$. Denote $\Lambda' = \bigcup_{j\in [n]}(U_j \times V_j)$ and $\Lambda = \bigcup_{j\in [n+1]}(U_j \times V_j)$. Then by the induction hypothesis, we have
    \begin{equation}\label{hypothesis}
    \ker(\lr_{\Lambda'}) = \bigoplus_{S \subset [n]} \left(H^{(n)}_S \otimes K^{(n)}_S\right).  
    \end{equation}
    Using proposition \eqref{productKer}, we have 
    \begin{equation} \label{n+1thterm}
    \ker(\lr_{U_{n+1}\times V_{n+1}}) = \left[\HA \otimes \ker(\lr_{V_{n+1}})\right] \oplus \left[\ker(\lr_{U_{n+1}}) \otimes (\ker(\lr_{V_{n+1}}))^{\perp}\right].   
    \end{equation}
    Since $\Lambda = \Lambda' \cup (U_{n+1} \times V_{n+1})$, we get that
    \begin{equation} \label{intersectionequation}
    \ker(\lr_{\Lambda}) = \ker(\lr_{\Lambda'}) \cap \ker(\lr_{U_{n+1}\times V_{n+1}}).    
    \end{equation}
    Note that if $W$ and $W_1,...,W_k$ are subspaces of a Hilbert space $H$ such that $W_1,...,W_k \subset W$, then 
    \begin{equation*}
     W =  (W_1+...+W_k) \oplus W_{1,...,k}' \text{ for some } W'_{1,...,k} \subset W. 
    \end{equation*}
    Note that there are infinitely many $W'_{1,...,k}$ that satisfy the above properties. We choose and fix one of them to define the following subspaces $X_S,Y_S,Z_S,P$ and $Q$ of $\HAB$ having the property:
    \begin{align*}
        H_{S}^{(n+1)} & = H_{S\cup\{n+1\}}^{(n+1)} \oplus X_S \\
        K_{S}^{(n)} & = K_S^{(n+1)} \oplus K_{S\cup\{n+1\}}^{(n+1)} \oplus Y_S \\
        H_{\{n+1\}}^{(n+1)} & = H_{S\cup \{n+1\}}^{(n+1)} \oplus Z_S
        \text{\quad for each subset $S \subset [n]$},
    \end{align*}
    and 
    \begin{align*}
        \ker(\lr_{V_{n+1}})& = \left(\bigoplus_{S \subset [n]} K_S^{(n+1)}\right) \oplus P\\
        \ker(\lr_{V_{n+1}})^{\perp}& = \left(\bigoplus_{S \subset [n]} K_{S\cup \{n+1\}}^{(n+1)}\right) \oplus Q. 
    \end{align*}
    
If $S_1, S_2 \subset [n+1]$ with $S_1 \ne S_2$, then $(S_1 \setminus S_2)$ or $(S_2 \setminus S_1)$ is non-empty. Say $(S_1 \setminus S_2)$ is non-empty, then there exists $j \in [n+1]$ such that $j \in S_1 \setminus S_2$, so $K_{S_1}^{(n+1)} \subset \ker(\lr_{V_j})^{\perp}$ and $ K_{S_2}^{(n+1)} \subset \ker(\lr_{V_j})$, i.e. $K_{S_1}^{(n+1)}$ and $K_{S_2}^{(n+1)}$ are orthogonal to each other and we get that $ K_{S_1}^{(n+1)} \cap K_{S_2}^{(n+1)} = \{0\}$. That is why we can write direct sum instead of sum in the equations involving $P$ and $Q$.\\
    
Also note that for any $S \subset [n]$, if $v \in X_S \cap H_{\{n+1\}}^{(n+1)}$, then $ v\in H_{S}^{(n+1)} \cap   H_{\{n+1\}}^{(n+1)} =  H_{S\cup\{n+1\}}^{(n+1)}$, i.e. $v \in X_S \cap H_{S\cup\{n+1\}}^{(n+1)} = \{0\} $, i.e. $v=0$. Therefore  we get that $X_S \cap H_{\{n+1\}}^{(n+1)}= \{0\}$ and $X_S \cap Z_S = \{0\}$.\\
    
    Note that if $S \subset [n]$, then $H_S^{(n)} = H_S^{(n+1)}$, therefore for each $S \subset [n]$, we can have the decomposition
    \begin{align} \label{S_term}
        \nonumber& H^{(n)}_S \otimes K^{(n)}_S \\
    \nonumber    = & H^{(n+1)}_S \otimes \left[K_S^{(n+1)} \oplus K_{S\cup\{n+1\}}^{(n+1)} \oplus Y_S\right]  \\
    \nonumber    = & \left[H^{(n+1)}_S \otimes K_S^{(n+1)}\right] \oplus  \left[(H_{S\cup\{n+1\}}^{(n+1)} \oplus X_S) \otimes K_{S\cup\{n+1\}}^{(n+1)}\right] \oplus \left[ H^{(n+1)}_S \otimes Y_S\right]  \\
        = & \left[H^{(n+1)}_S \otimes K^{(n+1)}_S\right] \oplus \left[H^{(n+1)}_{S\cup \{n+1\}} \otimes K^{(n+1)}_{S\cup \{n+1\}}\right] \oplus \left[X_S \otimes K^{(n+1)}_{S\cup \{n+1\}}\right] \oplus \left[H^{(n+1)}_S \otimes Y_S \right]   
    \end{align}
    Now, we can decompose the two subspaces appearing in $\ker(\lr_{U_{n+1}\times V_{n+1}})$ as follows
    \begin{align} \label{1st_term_of_second}
       \nonumber & \HA \otimes \ker(\lr_{V_{n+1}}) \\
    \nonumber = & \HA \otimes \left[\left(\oplus_{S \subset [n]} K_S^{(n+1)}\right)\oplus P \right]\\
    \nonumber= & \left[H^{(n+1)}_{\phi} \otimes K^{(n+1)}_{\phi}\right] \oplus \left[ \bigoplus_{\substack{S\subset [n]\\S\ne \phi}} \left(H^{(n+1)}_{S} \otimes K^{(n+1)}_{S}\right) \right] \\
    & \hspace{4cm}\oplus \left[ \bigoplus_{\substack{S\subset [n]\\S\ne \phi}} \left( {H^{(n+1)}_S}^{\perp} \otimes K^{(n+1)}_{S} \right) \right]  \oplus [\HA \otimes P]
    \end{align}
    and
    \begin{align}\label{2nd_term_of_second}
       \nonumber & \ker(\lr_{U_{n+1}}) \otimes (\ker(\lr_{V_{n+1}}))^{\perp} \\
    \nonumber = & H_{\{n+1\}}^{(n+1)} \otimes  \left[\left(\oplus_{S \subset [n]} K_{S\cup \{n+1\} }^{(n+1)}\right) \oplus Q \right] \\
    \nonumber = & \left[H_{\{n+1\}}^{(n+1)} \otimes K^{(n+1)}_{\{n+1\}}\right] \oplus \left[ \bigoplus_{\substack{S\subset [n]\\S\ne \phi}} \left(H^{(n+1)}_{S\cup \{n+1\} } \otimes K^{(n+1)}_{S \cup \{n+1\}} \right)\right]  \\
    & \hspace{4cm}\oplus \left[ \bigoplus_{\substack{S\subset [n]\\S\ne \phi}} \left( Z_S \otimes K^{(n+1)}_{S\cup \{n+1\}}\right) \right]
     \oplus \left[H_{\{n+1\}}^{(n+1)}\otimes Q\right] 
    \end{align}
    Let  $S_0,S \subset [n]$.
    \begin{itemize}
        \item Since $S \ne S_0 \cup \{n+1\}$ and so $ K^{(n+1)}_{S_0\cup \{n+1\}} $ and $K^{(n+1)}_{S}$ are orthogonal, we have $$\left[X_{S_0} \otimes K^{(n+1)}_{S_0\cup \{n+1\}}\right] \cap \left[{H^{(n+1)}_S}^{\perp} \otimes K^{(n+1)}_{S}\right] = \{0\}.$$ 
        \item Since $K^{(n+1)}_{S_0\cup \{n+1\}} \subset \ker(\lr_{V_{n+1}})^{\perp}$ and $P \subset \ker(\lr_{V_{n+1}})$, we have $$\left[X_{S_0} \otimes K^{(n+1)}_{S_0\cup \{n+1\}}\right] \cap [\HA \otimes P] = \{0\}.$$
        \item When $S \ne S_0$ then $K^{(n+1)}_{S\cup \{n+1\}}$ and $K^{(n+1)}_{S_0\cup \{n+1\}}$ are orthogonal and when $S=S_0$ then $X_{S_0} \cap Z_{S_0} = \{0\}$, therefore we have $$\left[X_{S_0} \otimes K^{(n+1)}_{S_0\cup \{n+1\}}\right]  \cap \left[ Z_S \otimes K^{(n+1)}_{S\cup \{n+1\}} \right] = \{0\}.$$
        \item Since $X_{S_0} \cap H_{\{n+1\}}^{(n+1)}= \{0\}$, we have $$\left[X_{S_0} \otimes K^{(n+1)}_{S_0\cup \{n+1\}}\right]  \cap \left[ H_{\{n+1\}}^{(n+1)}\otimes Q \right] = \{0\}.$$ 

        \item When $S=S_0$, then $Y_{S_0} \cap K^{(n+1)}_{S_0} = \{0\}$ by definition of $Y_{S_0}$. When $S \ne S_0$ then either $S\setminus S_0$ or $S_0\setminus S$ is non-empty. Say $S \setminus S_0$ is non-empty, then there exists $j\in S\setminus S_0$ with $j\in [n]$ so that $K^{(n+1)}_{S} \subset
        \ker(\lr_{V_j})^{\perp}$ and $ Y_{S_0} \subset \ker(\lr_{V_j})$ and so $Y_{S_0} \cap K^{(n+1)}_{S} = \{0\}$. Therefore, we have $$\left[H^{(n+1)}_{S_0} \otimes Y_{S_0}\right] \cap \left[{H^{(n+1)}_{S}}^{\perp} \otimes K^{(n+1)}_{S}\right] = \{0\}.$$ 
        \item If $ v\in Y_{S_0} \cap P$, then $v \in K_{S_0} \cap \ker(\lr_{V_{n+1}}) = K^{(n+1)}_{S_0}$, i.e. $v \in Y_{S_0} \cap K^{(n+1)}_{S_0}=\{0\}$. Therefore, we have $$\left[H^{(n+1)}_{S_0} \otimes Y_{S_0}\right] \cap [\HA \otimes P] = \{0\}.$$ 
        \item When $S=S_0$, then $Y_{S_0} \cap K^{(n+1)}_{S_0 \cup \{n+1\} } = \{0\}$ by definition of $Y_{S_0}$. When $S \ne S_0$ then either $S\setminus S_0$ or $S_0\setminus S$ is non-empty. Say $S \setminus S_0$ is non-empty, then there exists $j\in S\setminus S_0$ with $j\in [n]$ so that $K^{(n+1)}_{S\cup \{n+1\}} \subset
        \ker(\lr_{V_j})^{\perp}$ and $ Y_{S_0} \subset \ker(\lr_{V_j})$ and so $Y_{S_0} \cap K^{(n+1)}_{S\{n+1\}} = \{0\}$. Therefore, we have $$\left[H^{(n+1)}_{S_0} \otimes Y_{S_0}\right]  \cap \left[ Z_S \otimes K^{(n+1)}_{S\cup \{n+1\}} \right] = \{0\}.$$ 
        \item If $ v\in Y_{S_0} \cap Q$, then $v \in K_{S_0} \cap \ker(\lr_{V_{n+1}})^{\perp} = K^{(n+1)}_{S_0\cup \{n+1\}}$, i.e. $v \in Y_{S_0} \cap K^{(n+1)}_{S_0\cup \{n+1\}}=\{0\}$. Therefore, we have $$\left[H^{(n+1)}_{S_0} \otimes Y_{S_0}\right]  \cap \left[ H_{\{n+1\}}^{(n+1)}\otimes Q \right] = \{0\} .$$ 
    \end{itemize}
    Therefore, using these information and equations \eqref{hypothesis}, \eqref{n+1thterm}, \eqref{intersectionequation}, \eqref{S_term}, \eqref{1st_term_of_second} and \eqref{2nd_term_of_second}, we see that we can write
    $$   \ker(\lr_{\Lambda})
       = \left[ \left(\bigoplus_{S \subset [n+1]} \left(H^{(n+1)}_S \otimes K^{(n+1)}_S\right)\right) \oplus F \right] \cap \left[ \left(\bigoplus_{S \subset [n+1]} \left(H^{(n+1)}_S \otimes K^{(n+1)}_S\right)\right) \oplus F' \right]$$
    where 
    \begin{align*}
    F & = \bigoplus_{S \subset [n]} \left( \left[X_S \otimes K^{(n+1)}_{S\cup \{n+1\}}\right] \oplus \left[H^{(n+1)}_S \otimes Y_S \right]\right), \\
    F' & = \left[ \bigoplus_{\substack{S\subset [n]\\S\ne \phi}} \left( {H^{(n+1)}_S}^{\perp} \otimes K^{(n+1)}_{S} \right) \right]  \oplus [\HA \otimes P] \\
    & \hspace{5cm}\oplus \left[ \bigoplus_{\substack{S\subset [n]\\S\ne \phi}} \left( Z_S \otimes K^{(n+1)}_{S\cup \{n+1\}}\right) \right]
     \oplus \left[H_{\{n+1\}}^{(n+1)}\otimes Q\right] 
    \end{align*}
    with the property that $F \cap F' = \{0\}$, which implies
    $$\ker(\lr_{\Lambda})
       =  \bigoplus_{S \subset [n+1]} \left(H^{(n+1)}_S \otimes K^{(n+1)}_S\right).$$
    Hence we are done with the induction proof.
\end{proof}

\begin{example}
    To see the decomposition better, we provide the subspaces involved for the case $n=2$ here:
    \begin{align*}
        H^{(2)}_{\phi} \otimes K^{(2)}_{\phi} & =  \HA \otimes [\ker(\lr_{V_1}) \cap \ker(\lr_{V_2})] \\
        H^{(2)}_{ \{1\} } \otimes K^{(2)}_{\{1\}} & = \ker(\lr_{U_1}) \otimes \left[\ker(\lr_{V_1})^{\perp} \cap \ker(\lr_{V_2})\right] \\
        H^{(2)}_{\{2\}} \otimes K^{(2)}_{\{2\}} & = \ker(\lr_{U_2}) \otimes \left[\ker(\lr_{V_1}) \cap \ker(\lr_{V_2})^{\perp}\right]\\
        H^{(2)}_{\{1,2\}} \otimes K^{(2)}_{\{1,2\}} & = \left[\ker(\lr_{U_1}) \cap \ker(\lr_{U_2})\right] \otimes \left[\ker(\lr_{V_1})^{\perp}  \cap \ker(\lr_{V_2})^{\perp}\right].
    \end{align*}  
\end{example}

\begin{corollary}\label{main_corollary}
    Let $\Lambda \subset M_1 \times M_2$ such that $\Lambda$ is the union of finitely many non-empty subsets of $M_1\times M_2$ all of which are products, i.e. $\Lambda = \bigcup_{j=1}^n (U_j\times V_j)$ for $U_j \subset M_1$ and $V_j \subset M_2$, then $\rho_{\Lambda}^{\ker}$ and $\rho_{\Lambda}^{\ker^{\perp}}$ are separable. In particular, when $\Lambda$ is a non-empty product subset, then $\rho_{\Lambda}^{\ker}$ and $\rho_{\Lambda}^{\ker^{\perp}}$ are separable.
\end{corollary}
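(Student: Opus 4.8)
The plan is to deduce the corollary directly from Theorem~\ref{union_thorem} by exhibiting an orthonormal basis of $\ker(\lr_{\Lambda})$ consisting entirely of decomposable (product) vectors. Once we have such a basis, separability of $\rho_{\Lambda}^{\ker}$ is immediate, and separability of $\rho_{\Lambda}^{\ker^{\perp}}$ then follows from Observation~\ref{obs}.

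First I would observe that the direct sum $\ker(\lr_{\Lambda}) = \bigoplus_{S\subseteq[n]} \bigl(H^{(n)}_S\otimes K^{(n)}_S\bigr)$ provided by the theorem is in fact an \emph{orthogonal} direct sum. Indeed, for distinct $S_1,S_2\subseteq[n]$ there is an index $j$ in their symmetric difference, say $j\in S_1\setminus S_2$; then $K^{(n)}_{S_1}\subseteq\ker(\lr_{V_j})^{\perp}$ while $K^{(n)}_{S_2}\subseteq\ker(\lr_{V_j})$, so $K^{(n)}_{S_1}\perp K^{(n)}_{S_2}$ and hence $H^{(n)}_{S_1}\otimes K^{(n)}_{S_1}\perp H^{(n)}_{S_2}\otimes K^{(n)}_{S_2}$. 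This is exactly the orthogonality fact already used inside the proof of Theorem~\ref{union_thorem}.

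Next, for each $S\subseteq[n]$ I would choose an orthonormal basis $\{e^S_a\}_a$ of the (finite-dimensional) subspace $H^{(n)}_S\subseteq\HA$ and an orthonormal basis $\{f^S_b\}_b$ of $K^{(n)}_S\subseteq\HB$. Then $\{e^S_a\otimes f^S_b\}_{a,b}$ is an orthonormal basis of $H^{(n)}_S\otimes K^{(n)}_S$ consisting of decomposable vectors, and by the orthogonality just noted the union over all $S$ of these bases is an orthonormal basis $\{w_i\}_i$ of $\ker(\lr_{\Lambda})$ with every $w_i$ decomposable. Consequently $\Pi^{\ker}_{\Lambda}=\sum_i \mathcal{P}_{w_i}$, so $\rho^{\ker}_{\Lambda}=\frac{1}{\tr(\Pi^{\ker}_{\Lambda})}\sum_i \mathcal{P}_{w_i}$ is a convex combination of the separable pure states $\mathcal{P}_{w_i}$; hence $\rho^{\ker}_{\Lambda}$ is separable. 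By Observation~\ref{obs}, $\rho^{\ker^{\perp}}_{\Lambda}$ is separable as well. The final assertion is the special case $n=1$, which is also covered directly by Proposition~\ref{productKer}.

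I do not expect a genuine obstacle here: the substantive work is already done in Theorem~\ref{union_thorem}, and the only point requiring care is verifying that its direct sum decomposition is orthogonal, so that gluing together the per-summand product bases really produces an orthonormal basis of the entire kernel.
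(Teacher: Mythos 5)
Your proposal is correct and follows essentially the same route as the paper: invoke Theorem \ref{union_thorem}, note that the summands $H^{(n)}_{S}\otimes K^{(n)}_{S}$ are mutually orthogonal because the $K^{(n)}_{S}$ are, assemble an orthonormal basis of $\ker(\lr_{\Lambda})$ out of decomposable vectors, write $\rho_{\Lambda}^{\ker}$ as a convex combination of the resulting separable pure states, and conclude for $\rho_{\Lambda}^{\ker^{\perp}}$ via Observation \ref{obs}. The only difference is presentational: you spell out the choice of per-summand product bases and the $n=1$ specialization slightly more explicitly than the paper does.
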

\begin{proof}
    Using theorem \eqref{union_thorem}, we see that when $\Lambda = \bigcup_{j=1}^n (U_j \times V_j)$ then $\ker(\lr_{\Lambda})$ is direct sum of tensor product of Hilbert spaces. Further notice that for $S_1, S_2 \subset [n]$ with $S_1 \ne S_2$, the subspaces $H_{S_1}^{(n)} \otimes K_{S_1}^{(n)}$ and $H_{S_2}^{(n)} \otimes K_{S_2}^{(n)}$ are orthogonal to each other. This can be seen as $K_{S_1}^{(n)}$ and $K_{S_2}^{(n)}$ are orthogonal to each other when $S_1 \ne S_2$ as mentioned in the proof of theorem \eqref{union_thorem}. Therefore $\ker(\lr_{\Lambda})$ contains an orthonormal basis consisting of separable vectors. Hence, $\rho_{\Lambda}^{\ker}$, being the orthogonal projection onto the subspace $\ker(\lr_{\Lambda})$, can be written as a convex sum of the pure states in this orthonormal basis. But all these pure states in this orthonormal basis are separable, hence $\rho_{\Lambda}^{\ker}$ is separable. From observation \eqref{obs}, we get that $\rho_{\Lambda}^{\ker^{\perp}}$ is also separable.
\end{proof}

\section{Subsets coming from states} \label{last_section}
In this section, we see that the states associated with subsets this way are not always separable. As a consequence of the following theorem, if we start with a holomorphic section $s_0$ that is itself not separable, to begin with, then the corresponding state $\rho_{\mathcal{V}(s_0)}^{\ker}$ is not separable. This gives us a family of submanifolds with the property that the associated states are not separable. 

\begin{theorem} \label{Vanishing_of_section}
    Let $s_0 \in \HAB$ be a pure state and $\mathcal{V}(s_0)=\{x\in M_1 \times M_2: s_0(x)=0\}$ be the zero set of $s_0$, then $\rho_{\mathcal{V}(s_0)}^{\ker} = \mathcal{P}_{s_0}$, i.e. $\rho_{\mathcal{V}(s_0)}^{\ker}$ is a pure state and as pure state it is equal to $s_0$. We deduce that the states $\rho_{\mathcal{V}(s_0)}^{\ker}$ and $\rho_{\mathcal{V}(s_0)}^{\ker^{\perp}}$ are separable if and only if $s_0$ is separable.
\end{theorem}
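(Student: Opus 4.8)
The plan is to identify $\ker(\mathcal{R}_{\mathcal{V}(s_0)})$ with the one-dimensional subspace $\mathbb{C}s_0$ of $\HAB$. Once that is established, the orthogonal projection onto $\ker(\mathcal{R}_{\mathcal{V}(s_0)})$ is $\mathcal{P}_{s_0}$ (here one uses that $s_0$ is a unit vector), and since $\tr(\mathcal{P}_{s_0})=1$ no renormalization is needed, so $\rho^{\ker}_{\mathcal{V}(s_0)}=\mathcal{P}_{s_0}$. One inclusion is immediate: by definition $s_0$ vanishes identically on its own zero set $\mathcal{V}(s_0)$, so $s_0\in\ker(\mathcal{R}_{\mathcal{V}(s_0)})$ and hence $\mathbb{C}s_0\subseteq\ker(\mathcal{R}_{\mathcal{V}(s_0)})$.

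For the reverse inclusion I would exploit very ampleness of $L_1\boxtimes L_2$, as hinted at in the introduction. It yields a closed holomorphic embedding $\iota\colon M_1\times M_2\hookrightarrow\mathbb{P}((\HAB)^{*})=:\mathbb{P}^{N}$ with $\iota^{*}\mathcal{O}_{\mathbb{P}^{N}}(1)\cong L_1\boxtimes L_2$ and, since we embed by the complete linear system, with $\iota^{*}$ inducing an isomorphism $H^{0}(\mathbb{P}^{N},\mathcal{O}(1))\xrightarrow{\ \sim\ }\HAB$. Because $M_1\times M_2$ is connected, $Z:=\iota(M_1\times M_2)$ is an irreducible projective variety, and it is nondegenerate, i.e.\ not contained in any hyperplane. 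Under this identification each $s\in\HAB$ corresponds to a linear form $\ell_{s}$ on $\mathbb{P}^{N}$, unique up to scalar, with $\mathcal{V}(s)=\iota^{-1}(\{\ell_{s}=0\})$. Thus if $t\in\ker(\mathcal{R}_{\mathcal{V}(s_0)})$, then $\ell_{t}$ vanishes on $Z\cap\{\ell_{s_0}=0\}$. The crux is to show that the linear span of $Z\cap\{\ell_{s_0}=0\}$ is the entire hyperplane $\{\ell_{s_0}=0\}$; granted that, any hyperplane containing $Z\cap\{\ell_{s_0}=0\}$ contains that span, hence equals $\{\ell_{s_0}=0\}$, so $\ell_{t}$ is a scalar multiple of $\ell_{s_0}$ and $t\in\mathbb{C}s_0$.

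I expect this span statement to be the main obstacle, and its content is cleanest to phrase via divisors. Write $D_{0}=\operatorname{div}(s_0)$ and $D_{t}=\operatorname{div}(t)$ as effective divisors in the complete linear system $|L_1\boxtimes L_2|$. The hypothesis $t|_{\mathcal{V}(s_0)}\equiv 0$ says $\operatorname{supp}(D_{0})\subseteq\operatorname{supp}(D_{t})$, and the goal is to upgrade this to $D_{t}=D_{0}$: then $t/s_0$ is a nowhere-vanishing global holomorphic function on the compact connected manifold $M_1\times M_2$, hence a nonzero constant, and $t\in\mathbb{C}s_0$ follows. The upgrade is immediate once $D_{0}$ is reduced: then $\operatorname{supp}(D_{0})\subseteq\operatorname{supp}(D_{t})$ forces $D_{t}\geq D_{0}$, and $D_{t}-D_{0}$ is an effective divisor linearly equivalent to zero (both $D_t,D_0$ lie in $|L_1\boxtimes L_2|$), equivalently a holomorphic section of the trivial bundle with no forced vanishing, hence $D_{t}-D_{0}=0$. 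As $\mathcal{V}(s_0)$ is a smooth hypersurface in the cases of interest — so that $\operatorname{div}(s_0)$ is automatically reduced — this is exactly the hypothesis under which I would run the argument, and the nondegenerate-span statement above is its projective-geometric counterpart. (Ampleness and $\dim(M_1\times M_2)\geq 2$ also guarantee $\mathcal{V}(s_0)\neq\emptyset$, so $\mathcal{R}_{\mathcal{V}(s_0)}$ is a restriction to a genuinely nonempty set.)

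Finally, for the ``separable iff'' clause: having shown $\rho^{\ker}_{\mathcal{V}(s_0)}=\mathcal{P}_{s_0}$, note that in any convex decomposition $\mathcal{P}_{s_0}=\sum_{j}p_{j}\mathcal{P}_{v_{j}}$ with $p_{j}>0$ every $v_{j}$ lies in the range $\mathbb{C}s_0$ of $\mathcal{P}_{s_0}$ and hence equals $s_0$ up to a unit scalar; so $\mathcal{P}_{s_0}$ is separable as a mixed state precisely when $s_0$ is separable as a pure state. Combining with Observation \ref{obs}, $\rho^{\ker^{\perp}}_{\mathcal{V}(s_0)}$ is separable iff $\rho^{\ker}_{\mathcal{V}(s_0)}$ is, hence iff $s_0$ is separable, which is the last assertion.
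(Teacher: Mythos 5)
Your overall route is the same as the paper's: use very ampleness of $L_1\boxtimes L_2$ to embed $M_1\times M_2$ by the complete linear system, regard $s_0$ and any $t\in\ker(\lr_{\mathcal{V}(s_0)})$ as pullbacks of hyperplane sections, and conclude $t\in\C s_0$ by showing a ratio is a global holomorphic function on a compact connected manifold (your ``$D_t=D_0$'' endgame is exactly the paper's step where $h_\alpha=f_\alpha/f_{0\alpha}$ patches to a constant). The genuine gap is the one you flag yourself: the theorem is stated for an \emph{arbitrary} pure state $s_0\in\HAB$, whereas you carry out the reverse inclusion only under the added hypothesis that $\operatorname{div}(s_0)$ is reduced, since $\operatorname{supp}(D_0)\subseteq\operatorname{supp}(D_t)$ forces $D_t\ge D_0$ only when every component of $D_0$ occurs with multiplicity one; the non-reduced case is left untreated. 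This is precisely the point where the paper's proof instead asserts that, $s_0$ and $t$ being pullbacks of sections of $\mathcal{O}(1)$, their local representatives vanish only to order $1$, i.e.\ the paper claims the reducedness you declined to assume. (A side remark: your parenthetical that smoothness of $\mathcal{V}(s_0)$ guarantees reducedness is not accurate --- the square of a section of a square root of $L_1\boxtimes L_2$ can have smooth zero set and non-reduced divisor --- so the hypothesis should be stated as reducedness itself.)

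You should also know that your caution is well founded and the missing step cannot be supplied in the stated generality: being a pullback of a hyperplane section does not force order-one vanishing. Take $M_1=M_2=\mathbb{P}^1$, $L_1=L_2=\mathcal{O}(2)$ (very ample, prequantum for twice the Fubini--Study form), and $s_0$ proportional to $x_0^2\otimes y_0^2$. Then $\mathcal{V}(s_0)=(\{x_0=0\}\times M_2)\cup(M_1\times\{y_0=0\})$, and $x_0x_1\otimes y_0y_1$ lies in $\ker(\lr_{\mathcal{V}(s_0)})$ without being a multiple of $s_0$; consistently, Theorem \ref{union_thorem} gives $\ker(\lr_{\mathcal{V}(s_0)})=\ker(\lr_{\{x_0=0\}})\otimes\ker(\lr_{\{y_0=0\}})$, which is four-dimensional, so $\rho_{\mathcal{V}(s_0)}^{\ker}\ne\mathcal{P}_{s_0}$ for this $s_0$. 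So the reduced-divisor hypothesis you introduce is doing real work rather than being an artifact of your method: the conclusion holds in exactly that generality, and the order-one vanishing claim in the paper's own proof is the step that fails for such $s_0$. Your first inclusion, the normalization $\tr(\mathcal{P}_{s_0})=1$, and the separability clause (any convex decomposition of $\mathcal{P}_{s_0}$ uses only unit multiples of $s_0$, combined with Observation \ref{obs}) are all correct.
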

\begin{proof}
    As $L_1 \to M_1$ and $L_2 \to M_2$ are very ample quantum line bundles, therefore $L_1 \boxtimes L_2 \to M_1 \times M_2$ is also very ample quantum line bundle. We know that using this line bundle we can embed $\iota: M_1 \times M_2 \to \Pro^N $ for some $N\in \N$ and that $L_1 \boxtimes L_2 \cong \iota^*(\mathcal{O}(1))$ (as holomorphic line bundle), the pullback of the hyperplane line bundle on $\Pro^N$ \cite[][page 7]{Berceanu_2000}. Let $s \in \ker(\lr_{\mathcal{V}_{s_0}})$. There exist holomorphic sections $ \tau_0$ and $\tau$ of $\mathcal{O}(1) \to \Pro^N$ such that $s_0 = \iota^*(\tau_0)$ and $s = \iota^*(\tau)$. Let $\{(U_{\alpha},\phi_{U_{\alpha}})\}$ be a trivializing open cover of the line bundle $L_1 \boxtimes L_2 \to M_1 \times M_2$. Let $f_{0\alpha}$ and $f_{\alpha}$ be the local representing functions on $U_{\alpha}$ that determines the sections $s_0$ and $s$. Since $s_0$ and $s$ are pullbacks of sections $\tau_0$ and $\tau$ of $\mathcal{O}(1)$, therefore the order of vanishing of all the zeroes of $f_{0\alpha}$ and $f_{\alpha}$ is 1.\\

    For each $\alpha$, we define $h_{\alpha} : U_{\alpha} \to \C$ given by $h_{\alpha}(x) = \frac{f_{\alpha}(x)}{f_{0\alpha}(x)}.$
    Since the order of vanishing of both $f_{0\alpha}$ and $f_{\alpha}$ is 1 and the vanishing set of $f_{0\alpha}$ is a subset of the vanishing set of $f_{\alpha}$, therefore $h_{\alpha}$ is a holomorphic function on $U_{\alpha}$. Also for $\alpha$ and $\beta$ with $U_{\alpha} \cap U_{\beta} \ne \phi$ we have
    $$h_{\alpha}(x) =\frac{f_{\alpha}(x)}{f_{0\alpha}(x)}= \frac{g_{\alpha\beta}(x)f_\beta(x)}{g_{\alpha\beta}(x)f_\beta(x)} = \frac{f_{\beta}(x)}{f_{0\beta}(x)}= h_{\beta}(x) \text{ \quad for all } x \in U_{\alpha} \cap U_{\beta}$$
    where $g_{\alpha\beta}$ is the transition function (so $g_{\alpha\beta}(x) \in GL_1(\C)$ is non-zero). Therefore $h$ given by $h_{\alpha}$ on $U_{\alpha}$ is a holomorphic function on $M_1 \times M_2$. But since $M_1 \times M_2$ is compact and connected, we see that $h$ is constant. Therefore, we get that $s$ is a constant multiple of $s_0$. Hence we have,
    $$ \ker(\lr_{\mathcal{V}_{s_0}}) = \{\lambda s_0: \lambda \in \C\} \text{ and } \rho_{\mathcal{V}(s_0)}^{\ker} = \mathcal{P}_{s_0}.$$
\end{proof}
In the previous theorem, we considered a particular subset that is related to the pure state $s_0$ and showed that the state $\rho_{\mathcal{V}(s_0)}^{\ker}$ is the same as the original pure state to begin with. We want to find special subsets that have this property with respect to a mixed state. As our associated states are orthogonal projections, we note that for a mixed state to have a corresponding subset, it has to be an orthogonal projection. We define these subsets using coherent states and the covariant symbol associated with the mixed state.\\

Let us briefly recollect the notion of coherent states \cite[see ][]{kirwin2005coherent, Berceanu_2000}. For this part of the article, we shall use the Dirac Bra-ket notation wherever it seems appropriate and convenient. Consider a compact connected K\"ahler manifold $M$ with quantum line bundle $L$ that is very ample. Let $\{\theta_j\}_{j=1}^d$ be an orthonormal basis for $H^0(M,L)$. The reproducing kernel known as the generalized Bergman kernel is the section $K $ of $\overline{L}\boxtimes L \to M \times M$ (where $\overline{L} \cong L^*$ using the hermitian structure of the line bundle $L$) given by 
$$K(x,y):= \sum_{j=1}^d \overline{\theta_j(x)} \otimes \theta_j(y) \text{ \quad for } x,y\in M.$$
For each $x \in M$, we define $\Phi_x \in \overline{L_x}\otimes H^0(M,L)$ by $$ \Phi_x := K(x,.) = \sum_{j=1}^d \overline{\theta_j(x)} \otimes \theta_j.$$
Using appropriate trivialization and identifying $1 \otimes \theta_j$ with $\theta_j$, we can think of $\Phi_x$ as a holomorphic section of $L\to M$. Using the reproducing property of the Bergman kernel, we have $\bra{\Phi_x}\ket{s} = s(x)$ for all $x\in M$ and $s\in H^0(M,L)$.
Then the \textit{coherent state} localized at $x\in M$, denoted by $\ket{x}$, is given by 
\[
\ket{x} :=  
    \frac{\Phi_x}{\|\Phi_x\|}.
\]
Note that $\Phi_x \ne 0$ for any $x\in M$ due to the very ample condition.
The\textit{ covariant symbol} $\hat{\sigma}$ associated to the operator $\sigma$ of $H^0(M,L)$ is given by $$\hat{\sigma}(x):= \expval{\sigma}{x} \text{ for all } x\in M.$$

\begin{theorem} \label{vanishing_of_cov_symb}
    Let $\sigma$ be a mixed state which is an orthogonal projection such that Ran$(\sigma)^{\perp}$ has a basis consisting of coherent states and $\hat{\sigma}$ be the covariant symbol associated with $\sigma$. Let $\mathcal{V}(\sigma) = \{x \in M_1 \times M_2: \hat{\sigma}(x)=0\} $, then $\rho_{\mathcal{V}(\sigma)}^{\ker}= \sigma$.
\end{theorem}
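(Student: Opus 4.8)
The plan is to reduce the entire statement, by means of the reproducing property of the coherent states, to the single orthogonality identity $\ker(\lr_{\mathcal{V}(\sigma)})^{\perp} = \mathrm{Ran}(\sigma)^{\perp}$; this is the coherent--state counterpart of the reduction carried out in the proof of Theorem \ref{Vanishing_of_section}. The first ingredient is a general remark, valid for \emph{any} subset $\Lambda \subset M_1 \times M_2$: one has $\ker(\lr_{\Lambda})^{\perp} = \mathrm{span}\{\ket{x} : x \in \Lambda\}$. Indeed, $\ket{x}$ is a nonzero scalar multiple of $\Phi_x$, so the reproducing property $\braket{\Phi_x}{s} = s(x)$ shows that a holomorphic section $s$ vanishes at $x$ if and only if $\braket{x}{s} = 0$, that is, $s \perp \ket{x}$; hence $s \in \ker(\lr_{\Lambda})$ precisely when $s$ is orthogonal to every $\ket{x}$ with $x \in \Lambda$, which is the asserted identity (we are in finite dimensions, so there are no closure issues).

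Next I would describe $\mathcal{V}(\sigma)$ intrinsically. Since $\sigma$ is positive semidefinite, $\hat{\sigma}(x) = \expval{\sigma}{x} = \lVert \sqrt{\sigma}\,\ket{x} \rVert^{2} \ge 0$, and this vanishes if and only if $\sqrt{\sigma}\,\ket{x} = 0$, equivalently $\sigma \ket{x} = 0$, equivalently $\ket{x} \in \ker(\sigma) = \mathrm{Ran}(\sigma)^{\perp}$ (the last equality because $\sigma$ is self-adjoint). Thus $\mathcal{V}(\sigma) = \{x \in M_1 \times M_2 : \ket{x} \in \mathrm{Ran}(\sigma)^{\perp}\}$, and, combined with the first step, $\ker(\lr_{\mathcal{V}(\sigma)})^{\perp} = \mathrm{span}\{\ket{x} : \ket{x} \in \mathrm{Ran}(\sigma)^{\perp}\}$. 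The inclusion of this span in $\mathrm{Ran}(\sigma)^{\perp}$ is clear. For the reverse inclusion I would invoke the hypothesis: choose a basis $\ket{x_1}, \dots, \ket{x_m}$ of $\mathrm{Ran}(\sigma)^{\perp}$ consisting of coherent states; then each $x_i$ lies in $\mathcal{V}(\sigma)$ by the description just obtained, so $\mathrm{Ran}(\sigma)^{\perp} = \mathrm{span}\{\ket{x_1}, \dots, \ket{x_m}\} \subseteq \ker(\lr_{\mathcal{V}(\sigma)})^{\perp}$. Hence $\ker(\lr_{\mathcal{V}(\sigma)}) = \mathrm{Ran}(\sigma)$, so $\Pi_{\mathcal{V}(\sigma)}^{\ker}$ is the orthogonal projection onto $\mathrm{Ran}(\sigma)$; since $\sigma$ is that orthogonal projection (up to the trace normalization), dividing by the trace yields $\rho_{\mathcal{V}(\sigma)}^{\ker} = \sigma$.

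The only substantive point is the reverse inclusion in the second step, and this is exactly where the hypothesis that $\mathrm{Ran}(\sigma)^{\perp}$ admits a basis of coherent states enters; I do not expect it can be dropped, because in general $\mathrm{span}\{\ket{x} : \ket{x} \in \mathrm{Ran}(\sigma)^{\perp}\}$ can be a proper subspace of $\mathrm{Ran}(\sigma)^{\perp}$ — possibly even $\{0\}$, in which case $\mathcal{V}(\sigma) = \emptyset$, $\ker(\lr_{\mathcal{V}(\sigma)}) = \HAB$, and the conclusion fails. Beyond that inclusion, the argument is pure bookkeeping with the Bergman kernel already introduced before the statement; note also that the product structure of $M_1 \times M_2$ is never used — only that $L_1 \boxtimes L_2$ is very ample (so $\Phi_x \ne 0$ for every $x$) and that the space of sections is finite-dimensional.
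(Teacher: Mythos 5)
Your proposal is correct and follows essentially the same route as the paper: identify $\mathcal{V}(\sigma)$ as the set of points whose coherent states lie in $\mathrm{Ran}(\sigma)^{\perp}$, then use the hypothesis that $\mathrm{Ran}(\sigma)^{\perp}$ has a basis of coherent states to conclude $\ker(\lr_{\mathcal{V}(\sigma)})=\mathrm{Ran}(\sigma)$ and hence $\rho_{\mathcal{V}(\sigma)}^{\ker}=\sigma$. The only cosmetic differences are that you phrase the kernel computation via $\ker(\lr_{\Lambda})^{\perp}=\mathrm{span}\{\ket{x}:x\in\Lambda\}$ and use $\hat{\sigma}(x)=\lVert\sqrt{\sigma}\,\ket{x}\rVert^{2}$, where the paper argues pointwise with an eigendecomposition $\sum_j|\braket{x}{s_j}|^{2}=0$.
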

\begin{proof}
    There exists an orthonormal set $\{s_1,...,s_k\}$ in $\HAB$ such that $\sigma  = \frac{1}{k}\sum_{j=1}^k\ket{s_j}\bra{s_j}$. Denote the range span$\{s_1,...,s_k\} $ of $\sigma$ by Ran$(\sigma)$. Then
    \begin{align*}
        \mathcal{V}(\sigma) & = \{x \in M_1 \times M_2: \expval{\sum_{j=1}^k  \ket{s_j}\bra{s_j}}{x}=0 \} \\
        & = \{x \in M_1 \times M_2: \sum_{j=1}^k |\bra{x}\ket{s_j}|^2=0 \}\\
        & = \{x \in M_1 \times M_2:  \bra{x}\ket{s_j}=0 \text{ for all } j \}\\
        & = \{x \in M_1 \times M_2: \text{ the coherent state }\ket{x} \text{ is in Ran}(\sigma)^{\perp}\}
    \end{align*}
    Due to the hypothesis that Ran$(\sigma)^{\perp}$ has a basis consisting of coherent states, the above set $\mathcal{V}(\sigma)$ is non-empty.
    Now we find the kernel of the restriction map to $\mathcal{V}(\sigma)$. We have
    \begin{align*}
        \ker(\lr_{\mathcal{V}(\sigma)}) & = \{s\in \HAB: s(x)=0 \text{ for all } x \in \mathcal{V}(\sigma)\}\\
        & = \{s\in \HAB: \bra{x}\ket{s}=0 \text{ for all coherent states } \ket{x} \text{ in Ran}(\sigma)^{\perp}\}
    \end{align*}
    Let $s\in $ Ran$(\sigma)$, then $\bra{s'}\ket{s}=0$ for all $s' \in $ Ran$(\sigma)^{\perp}$. In particular, $\bra{x}\ket{s}=0$ for all coherent states $\ket{x} \in $ Ran$(\sigma)^{\perp}$, i.e. $s \in \ker(\lr_{\mathcal{V}(\sigma)})$, i.e. Ran$(\sigma) \subset \ker(\lr_{\mathcal{V}(\sigma)})$.\\

    Conversely, suppose $s \in \ker(\lr_{\mathcal{V}(\sigma)})$, i.e. $\bra{x}\ket{s}=0$ for all coherent states $\ket{x} \in $ Ran$(\sigma)^{\perp}$. Because there is a basis for Ran$(\sigma)^{\perp}$ consisting of coherent states, we see that $\bra{s'}\ket{s} $ for all $s' \in $ Ran$(\sigma)^{\perp}$, i.e. $s \in  $ Ran$(\sigma)$.\\

    Therefore, we see that $ \ker(\lr_{\mathcal{V}(\sigma)}) = $ Ran$(\sigma)$. Hence $\rho_{\mathcal{V}(\sigma)}^{\ker}$ and $\sigma$, both being the orthogonal projection onto the same subspace and having unit trace, are equal. 
\end{proof}

\section*{Appendix}
\noindent\textbf{Proof of lemma \eqref{sumDistIntersec}}\\
    It is straightforward to verify that $\left(\bigcap_{j\in \mathcal{I}}W_j\right)\oplus\left(\bigcap_{j\in \mathcal{I}}V_j\right) \subset \bigcap_{j\in \mathcal{I}}(W_j\oplus V_j) $. For the converse, let $u\in \bigcap_{j\in \mathcal{I}}(W_j\oplus V_j)$. Then for each $j\in \mathcal{I}$, there exists unique $w_j \in W_j$ and $v_j \in V_j$ such that $u= w_j \oplus v_j$. Due to the hypothesis $V \cap W = \{0\}$, we have a well-defined subspace $V\oplus W$ of $X$ and we have $\bigcap_{j\in \mathcal{I}}(W_j\oplus V_j) \subset W \oplus V$. So, there exists unique $w\in W$ and $v\in V$ such that $u = w\oplus v$. For each $j$, since $w_j \in W_j \subset W$, $v_j\in V_j \subset V$ and $u=w_j \oplus v_j$, so by the uniqueness of the decomposition, we must have $w=w_j$ and $v=v_j$, i.e. $w\in W_j$ and $v\in V_j$ for all $j\in \mathcal{I}$, i.e. $w\in \bigcap_{j\in \mathcal{I}}W_j$ and $v\in \bigcap_{j\in \mathcal{I}}V_j$. Therefore, $u= w \oplus v \in \left(\bigcap_{j\in \mathcal{I}}W_j\right)\oplus\left(\bigcap_{j\in \mathcal{I}}V_j\right) $.
\qed\\

\section*{Acknowledgement}
We extend our gratitude to T. Barron for valuable discussions and suggestions, as well as for reading the pre-print to improve the exposition of the article.

\printbibliography


\end{document}